\documentclass[12pt]{amsart}
\usepackage{amsmath,amssymb,amsthm,booktabs,geometry,microtype}
\usepackage[colorlinks,linkcolor=blue,citecolor=blue,urlcolor=blue]{hyperref}
\usepackage{enumerate}

\newtheorem{theorem}{Theorem}[section]
\newtheorem{lemma}[theorem]{Lemma}
\newtheorem{proposition}[theorem]{Proposition}
\newtheorem{corollary}[theorem]{Corollary}
\newtheorem{conjecture}[theorem]{Conjecture}
\theoremstyle{definition}
\newtheorem{certificate}[theorem]{Certificate}
\newtheorem{definitionx}[theorem]{Definition}
\theoremstyle{remark}
\newtheorem{remark}[theorem]{Remark}

\DeclareMathOperator{\den}{den}
\newcommand{\Zt}{\mathbb Z_2}
\newcommand{\Qt}{\mathbb Q_2}
\newcommand{\ma}{\mu_\alpha}
\newcommand{\mb}{\mu_\beta}
\newcommand{\Nn}{\mathcal N}
\newcommand{\doi}[1]{\href{https://doi.org/#1}{\nolinkurl{doi:#1}}}

\title{The dyadic denominator law for the phase constants of the Jacobi zeros}

\author[I. Area]{Iv\'an Area}
\address[I. Area]{IFCAE, Universidade de Vigo, Departamento de Matem\'atica Aplicada II, E. E. Aeron\'au\-tica e do Espazo, Campus As Lagoas s/n, 32004 Ourense, Spain}
\email[I. Area]{area@uvigo.gal}

\subjclass[2020]{Primary 33C45, 11B68; Secondary 11B65, 11A07, 11S80, 41A60}

\keywords{Jacobi polynomials; phase constants; 2-adic valuation; denominators of asymptotic expansions; Bessel polynomials; $p$-adic congruences}

\begin{document}

\begin{abstract}
The asymptotic phase for the zeros of a Jacobi polynomial contains additive constants $\kappa_r$ that are not determined by the phase equation.  We study
their denominators as polynomials in $A=\alpha^2$ and $B=\beta^2$.  We prove that the odd part of $\den\kappa_r$ divides
$\operatorname{lcm}(1,3,\ldots,2r-1)$ and that $2^{E_r}\kappa_r$ is $2$-adically integral, where $E_r=3r-1+\nu_2((r-1)!)$.  The extremal coefficient is governed by the valuation law
\[
 \nu_2\!\left(\sum_{j=0}^{m}\binom mj\frac1{2j+1}\right)  =m+\nu_2(m+1),
\]
which follows from the identity $\sum_{k\ge0}k!/(2k+1)!!=0$ in $\mathbb Q_2$.  We also transform the conjectural sharp denominator law into a single coefficientwise statement. If $\Phi$ is the Borel transform of the Legendre tangent and $W(t)=\sinh(2t)\operatorname{Im}\Phi(t)/t^2=\sum_{m\ge0}w_mt^{2m}$, then the sharp law is equivalent, with equality preserved at each index, to $((2m)!)^2w_m\in\mathbb Z_2^\times$ for every $m$.  This final integrality
statement  (Conjecture~W below) has since been proved in the companion paper of this series, so the sharp denominator law holds in all orders; the present paper establishes the normal form and the valuation-exact transfer, and records the exact evidence and the structural obstructions that delimited that proof.
\end{abstract}

\maketitle

\section{Introduction}

We follow the standard normalisation of Jacobi polynomials and Bernoulli numbers in~\cite{Szego,DLMF}.  Let $P_n^{(\alpha,\beta)}$ be the Jacobi polynomials, $\alpha,\beta>-1$, and write the zeros as $x_{n,k}=\cos\theta_{n,k}$ with $0<\theta_{n,1}<\dots<\theta_{n,n}<\pi$. We shall denote
\[
  \Nn=n+\frac{\alpha+\beta+1}2,\qquad z=\cot\frac\theta2,\qquad   A=\alpha^2,\quad B=\beta^2 .
\]
Transforming the Jacobi equation to Liouville normal form and solving the Kummer phase equation recursively in $\Nn^{-2}$ produces a phase
\begin{equation}\label{eq:phase}
  \Xi(\theta)=\Nn\theta-\Bigl(\frac\alpha2+\frac14\Bigr)\pi   +\sum_{r\ge1}\Nn^{-(2r-1)}\Psi_r(z),
\end{equation}
whose zeros are the solutions of $\Xi(\theta_{n,k})=(k-\frac12)\pi$.  (We reserve $\Phi$ for the Borel transform of Section~\ref{sec:normalform}.)  Each integration in the recursive solution introduces a free additive constant; fixing them by the intrinsic rule ``take the odd Laurent primitive in $z$'' gives densities $\Psi_r$ which are Laurent polynomials in $z$ with coefficients in $\mathbb Q[A,B]$, and the additive constants are exactly
\[
  \kappa_r=\Psi_r(1),
\]
the value at the midpoint $\theta=\pi/2$.  They are antisymmetric
\[
\kappa_r(B,A)=-\kappa_r(A,B), 
\]
so $(A-B)\mid\kappa_r$; in particular they vanish identically in the ultraspherical case $\alpha=\beta$, which is why they are invisible for Legendre and Gegenbauer polynomials.

The first three are
\begin{align*}
  \kappa_1&=\frac{A-B}4,\\
  \kappa_2&=\frac{-7(A-B)+2(A^2-B^2)}{96},\\
  \kappa_3&=\frac{781(A-B)}{7680}-\frac{19(A^2-B^2)}{384}+\cdots
\end{align*}
and their denominators are
\[
  4=2^2,\qquad 96=2^5\!\cdot\!3,\qquad 7680=2^9\!\cdot\!15,\qquad   86016=2^{12}\!\cdot\!21,\qquad\dots
\]
The pattern in the two factors is the subject of this paper.

\subsection{Results}

Write $s_2(m)$ for the binary digit sum, $\nu_2$ for the $2$-adic valuation,
and
\[
  H_r=r+\nu_2\bigl((r-1)!\bigr),\quad
  E_r=(2r-1)+H_r=3r-1+\nu_2\bigl((r-1)!\bigr),\quad
  \omega(s)=\Bigl\lceil\frac s2\Bigr\rceil+\nu_2(s).
\]

\begin{theorem}[odd part]\label{thm:odd}
The odd part of $\den\kappa_r$ divides $\operatorname{lcm}(1,3,\dots,2r-1)$.
\end{theorem}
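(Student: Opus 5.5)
We track where odd primes can enter the denominators in the recursive construction of the phase \eqref{eq:phase}. Writing the Liouville normal form as $u''+(\Nn^2+Q(\theta))u=0$, the potential $Q$ is $\tfrac{1/4-\alpha^2}{4\sin^2(\theta/2)}+\tfrac{1/4-\beta^2}{4\cos^2(\theta/2)}$. In the variable $z=\cot(\theta/2)$ this becomes a Laurent polynomial in $z$, with coefficients in $\mathbb Z[\tfrac12][A,B]$ once $d\theta=-2\,dz/(1+z^2)$ is accounted for. The Kummer phase equation $\Xi'^2=\Nn^2+Q+\tfrac12\{\Xi,\theta\}$, expanded in $\Nn^{-2}$, determines the derivatives $\Psi_r'$ recursively. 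Each step involves only products, derivatives and the division by $2\Nn$ from the leading term, so the recursion for the derivatives introduces no odd denominators: $\frac{d\Psi_r}{dz}$ is a Laurent polynomial in $z$ with coefficients in $\mathbb Z_{(p)}[A,B]$ for every odd prime $p$.

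The only odd denominators therefore come from integration. Taking the odd Laurent primitive of $z^{k}$ divides by $k+1$. So the first key step is a degree bound: the Laurent polynomial $\frac{d\Psi_r}{dz}$ involves only even powers $z^{2j}$, by the parity in $z$ forced by the odd-primitive rule, with $|2j|\le 2r-2$. We would prove this by induction on $r$, since each order adds at most a fixed amount to the Laurent span. Its primitive then involves divisions by $2j+1$ with $|2j+1|\le 2r-1$. After evaluation at $z=1$, the odd part of the denominator of $\kappa_r=\Psi_r(1)$ divides $\operatorname{lcm}(1,3,\dots,2r-1)$.

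The main obstacle is this degree bookkeeping. We must check that the inverse-power terms coming from $Q$ do not accumulate faster than the positive ones. We must also make sure that the derivative recursion does not itself create odd denominators: the Schwarzian term involves only $\tfrac12$ and products, so that part is harmless. For the degree bound we would first try to establish that $\Psi_r$ is a combination of $z^{2j+1}$ with $-(r)\le j\le r-1$, refining the index range as the first cases $\kappa_1,\kappa_2,\kappa_3$ dictate. As a consistency check, $\kappa_3$ carries odd part $15$, which must divide $\operatorname{lcm}(1,3,5)=15$.
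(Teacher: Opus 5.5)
Your route is genuinely different from the paper's and can be completed. The step you defer, however, is exactly where your written claim is wrong.

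The bound $|2j|\le 2r-2$ on the exponents of $d\Psi_r/dz$ already fails at $r=1$, since $d\Psi_1/dz=-\tfrac14\bigl(\tfrac14-A\bigr)-\tfrac14\bigl(\tfrac14-B\bigr)z^{-2}$. The correct range is asymmetric, $-2r\le 2j\le 2r-2$ (this is what your later guess $-r\le j\le r-1$ for $\Psi_r$ amounts to), and it still gives $|2j+1|\le 2r-1$.

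The clean way to prove it is to induct on $\psi_r=d\Psi_r/d\theta$ rather than on $d\Psi_r/dz$. Write $\Xi'=\Nn(1+\epsilon)$ with $\epsilon=\sum_{r\ge1}\Nn^{-2r}\psi_r$. The coefficient of $\Nn^{-2(r-1)}$ in Kummer's equation $\Xi'^2=\Nn^2+Q-\tfrac12\{\Xi,\theta\}$ expresses $2\psi_r$ through three kinds of term. (The Schwarzian enters with a minus sign, but the sign is immaterial here.)
\begin{itemize}
\item $Q\,\delta_{r,1}$.
\item Products $\psi_a\psi_b$ with $a+b=r$.
\item Schwarzian products $\psi_{a_0}''\psi_{a_1}\cdots\psi_{a_k}$ or $\psi_a'\psi_b'\psi_{c_1}\cdots\psi_{c_k}$ whose indices sum to $r-1$.
\end{itemize}

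The operator $d/d\theta=-\tfrac12(1+z^2)\,d/dz$ sends $z^k$ to $-\tfrac k2(z^{k-1}+z^{k+1})$. So each $\theta$-derivative widens the exponent span by one on each side, and by induction $\psi_r$ is supported on $[-2r,2r]$.

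Each $\psi_r$ is even in $z$, because $Q$ is even and $d/d\theta$ reverses parity. This, rather than the odd-primitive rule, is what forces the parity and excludes a $z^{-1}$ term (a logarithm).

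Each $\psi_r$ is divisible by $1+z^2$, because every term contains $Q$, a product of two $\psi$'s, or a $\theta$-derivative. Exact division by the monic $1+z^2$ introduces no denominators and lowers only the top exponent by $2$. Hence $d\Psi_r/dz=-2\psi_r/(1+z^2)$ is supported on $[-2r,2r-2]$.

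Finally, Kummer's equation involves only $\Xi'$ and its derivatives, so the single integration at each order never feeds back into the recursion. Your conclusion then follows.

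The paper argues instead from the working definition \eqref{eq:kappafromL}. By the division-free Horn recurrence (Theorem~\ref{thm:hornrec}), $U_n,V_n\in\mathbb Z[\ma,\mb][x,y]$ and $L_n=T_n/(c+d)$ coefficientwise. Odd primes can therefore enter only through the radial degree $s=c+d\le 2r-1$. The substitution $\ma=A-\tfrac14$, the midpoint values $x_0,y_0$ and the prefactor $2^{-(2r-1)}$ contribute only powers of $2$.

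Your argument proves the statement for $\kappa_r=\Psi_r(1)$ as defined in the introduction. It therefore does not depend on the amplitude--phase identification of Remark~\ref{rem:identification}, which the paper only verifies for $r\le 8$; that is a real gain. In exchange, the paper's route controls the $2$-part with the same recurrence (Theorem~\ref{thm:coarseintro}). Your phase recursion divides by $2$ at every step (in $Q$, in $d/d\theta$, in solving for $\psi_r$, and in the Schwarzian), so it yields nothing useful $2$-adically.
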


\begin{theorem}[coarse integrality]\label{thm:coarseintro}
$2^{E_r}\kappa_r\in\Zt[\ma,\mb]$, where $\ma=A-\frac14$, $\mb=B-\frac14$.
\end{theorem}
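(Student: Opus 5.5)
Write the Liouville normal form of the Jacobi equation in the variable $\theta$. The potential is
\[
 q(\theta)=\Nn^2+\frac{1/4-\alpha^2}{4\sin^2(\theta/2)}+\frac{1/4-\beta^2}{4\cos^2(\theta/2)}
 =\Nn^2-\frac{\ma}{4\sin^2(\theta/2)}-\frac{\mb}{4\cos^2(\theta/2)} .
\]
In $z=\cot(\theta/2)$ this is $\Nn^2-\tfrac14\bigl(\ma(1+z^2)+\mb(1+z^{-2})\bigr)$, so $A,B$ enter only through $\ma,\mb$, and the perturbation is a Laurent polynomial with coefficients in $\tfrac14\Zt[\ma,\mb]$.

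Write the phase derivative as $\Xi'=\sqrt{q}\,(1+\sum_k \Nn^{-2k}u_k)$ and use the Kummer/Ermakov relation $\Xi'^2=q+\tfrac34(\Xi''/\Xi')^2-\tfrac12\Xi'''/\Xi'$. Expanding in $\Nn^{-2}$ gives a recursion for the densities $\psi_r(z)=d\Psi_r/d\theta$. Each $\psi_r$ is a Laurent polynomial in $z$ whose coefficients are polynomials in $\ma,\mb$, of total degree at most $r$. Using $d/d\theta=-\tfrac12(1+z^2)\,d/dz$, each differentiation costs at most one factor $2^{-1}$.

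The plan is to track an explicit $2$-adic weight through the recursion. The inductive claim is that $2^{H'_r}\psi_r$ has coefficients in $\Zt[\ma,\mb]$ for a bound $H'_r$. Each step has three sources of loss:
\begin{itemize}
\item a factor $\tfrac12$ from the square-root expansion, $\Xi'=\Nn+\tfrac{1}{2\Nn}(\cdots)$;
\item the factors from $d/d\theta$;
\item the Catalan-type coefficients $\binom{1/2}{k}$, whose valuation is $-(2k-1)+\nu_2$-corrections, i.e.\ exactly $-2k+s_2(k)$.
\end{itemize}
I expect the bookkeeping to produce a linear loss $2r$, plus the $\tfrac14$'s from $q$, so roughly $2^{-(2r-1)-r}$.

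The integration step is where the rest of the loss comes from. The odd Laurent primitive in $z$ maps $z^{j}$ to $z^{j+1}/(j+1)$, with the $\theta$-measure $d\theta=-2\,dz/(1+z^2)$ handled by expanding in $z^{\pm 2}$. Here only odd $j+1$ occur, or $j+1$ even with denominator $2\ell$. Then $\kappa_r=\Psi_r(1)$ is a finite sum of such coefficients. The divisions by $j+1$ for even $j+1$ should be bounded using the degree restriction on $z$: at order $r$ the $z$-exponents are bounded by $2r-1$. The accumulated product of these divisions along the $r-1$ integrations, together with the even-index factorial coming from the $(1+z^2)$ derivatives, should be exactly the source of $\nu_2((r-1)!)$. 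So I would aim to show that each $\Psi_r$'s coefficient of $z^{\pm(2i-1)}$ lies in $2^{-E_r}\Zt[\ma,\mb]$, with extra room coming from $\nu_2$ of $i$, and then evaluate at $z=1$.

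The main obstacle is getting exactly $\nu_2((r-1)!)$ rather than a cruder bound such as $\nu_2((2r)!)$. A naive bound loses a factor $2$ per integration for all even exponents. I would try first to organise $\Psi_r$ as $\sum_m c_m(\ma,\mb)\,\Theta_m(z)$ in a basis of iterated antiderivatives of $(1+z^2)^{m}z^{-2m}$-type functions. The plan is to prove that $c_m\,2^{E_r}(m-1)!^{-1}\ldots$ is integral, so that the only non-unit denominators come from one factorial $(r-1)!$. If that organisation fails, the fallback is a direct induction on $r$ proving the slightly stronger statement that the coefficient of $z^{2i-1}$ in $2^{E_r}\Psi_r$ lies in $\nu_2$-weighted ideal $2^{\nu_2(\ldots)}\Zt[\ma,\mb]$. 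This stronger hypothesis is designed so that the recursion preserves it and the integration divisions by even $2i$ are absorbed.
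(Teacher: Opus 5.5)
There is a genuine gap. The proposal never establishes the bound, and the intermediate statement it aims at is false.

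First, your account of where $\nu_2((r-1)!)$ comes from does not match the recursion.
\begin{itemize}
\item The Kummer relation involves only $\Xi',\Xi'',\Xi'''$. So the recursion for the densities $\psi_r$ is purely differential, and there is exactly one integration per order, not $r-1$.
\item Every term carries an even number of $\theta$-derivatives, so $\psi_r$ is even in $z$. Hence $\Psi_r=-2\int\psi_r(1+z^2)^{-1}\,dz$ has an integrand in powers of $z^{2}$. The odd primitive divides only by odd numbers $2j+1$, and the integration gains a factor $2$ rather than losing any.
\item All the $2$-adic loss therefore sits in the recursion: $\tfrac12$ from solving for $2\psi_r$, $\tfrac14$ from $q$, the coefficients $\tfrac12,\tfrac34$ of the Schwarzian, and at most one bit per $d/d\theta$. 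Counting these gives $2^{4r-1}\psi_r$ integral, hence only $2^{4r-2}\kappa_r\in\Zt[\ma,\mb]$. This is $s_2(r-1)$ bits weaker than $E_r=4r-2-s_2(r-1)$.
\end{itemize}

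Second, the missing bits are not present coefficientwise in $z$.
\begin{itemize}
\item The part of $\psi_r$ of degree $r$ in $(\ma,\mb)$ comes only from $\sqrt q$, namely $\binom{1/2}{r}(-Q)^r$; the Schwarzian terms lower the degree. So the coefficient of $\ma^r z$ in $\Psi_r$ is $\gamma_r=2\binom{2r}{r}/((2r-1)16^r)$, with $\nu_2(\gamma_r)=-E_r-\nu_2(r)$.
\item Concretely, $\Psi_2=\tfrac1{64}\ma^2(z+\tfrac13z^3)-\tfrac1{32}\ma(z+z^3)+(\text{terms containing }\mb)$. So $[\ma^2z]\Psi_2=\tfrac1{64}\notin2^{-5}\Zt$.
\item Only the evaluation at $z=1$, where $1+\tfrac13=\tfrac43$, brings $\kappa_2$ down to $\tfrac1{48}(\ma^2-\mb^2)-\tfrac1{16}(\ma-\mb)$.
\end{itemize}
Hence your target (every coefficient of $z^{\pm(2i-1)}$ in $2^{E_r}\Psi_r$ integral) fails at $r=2$. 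Any induction on the coefficients of $\Psi_r(z)$, including your fallback, cannot reach $E_r$. A proof along these lines needs a mechanism for cancellation at $z=1$ in every layer. For the top layer this is the inequality $\nu_2(I_r)\ge\nu_2(r)$, which is true by Corollary~\ref{cor:F} but is a genuine cancellation. The proposal has no such mechanism.

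The paper avoids the problem by changing representation rather than refining bookkeeping.
\begin{itemize}
\item $\kappa_r$ is read off from $L=\log F$, where $F$ is the inverse-factorial (Kamp\'e de F\'eriet) series in the Horn variables $x=-1/(t-1)$, $y=1/(t+1)$.
\item The Horn system for $F$ becomes the division-free Riccati recurrence of Theorem~\ref{thm:hornrec}, so $U_n,V_n\in\mathbb Z[\ma,\mb][x,y]$ exactly.
\item The only denominators in $\kappa_r=2^{-(2r-1)}\operatorname{Im}L_{2r-1}(x_0,y_0)$ are then: the explicit $2^{2r-1}$ coming from $Z=2\Nn$; the single division by the radial degree $s=c+d$ in $L_n=T_n/(c+d)$; and the evaluation at $x_0=\tfrac{1+i}2$, $y_0=\tfrac{1-i}2$.
\item That evaluation costs exactly $\omega(s)=\lceil s/2\rceil+\nu_2(s)$ per monomial (Lemma~\ref{lem:cost}). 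No cancellation is needed, and the elementary inequality $\max_{s\le 2r-1}\omega(s)\le H_r=2r-1-s_2(r-1)$ (Lemma~\ref{lem:omega}) finishes the proof.
\end{itemize}
In that argument $\nu_2((r-1)!)$ is not produced by any accumulation of divisions; it is slack in the last inequality.

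Note finally that the paper proves the statement for $\kappa_r$ as defined by \eqref{eq:kappafromL}. Its agreement with your $\Psi_r(1)$ is only verified for $r\le 8$ (Remark~\ref{rem:identification}), so a completed phase-recursion proof would establish the bound for a formally different object.
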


\begin{theorem}[the $2$-adic vanishing]\label{thm:vanishintro} We have
\[
\displaystyle\sum_{k\ge0}\frac{k!}{(2k+1)!!} =\sum_{k\ge0}\frac{2^k}{(2k+1)\binom{2k}{k}}=0 \quad \text{in} \quad \Qt.
\]
\end{theorem}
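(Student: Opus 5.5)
The plan is to view the series as a $2$-adic power series evaluated at $u=\frac12$. Its coefficients satisfy a first-order linear ODE, and that ODE is symmetric under $u\mapsto 1-u$. The vanishing should then follow because the ODE has no nonzero $2$-adically analytic homogeneous solution at $u=0$.

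\emph{Setup and convergence.} First, the two forms of the sum agree, since $(2k+1)!!=(2k+1)!/(2^kk!)$ gives $k!/(2k+1)!!=2^k/((2k+1)\binom{2k}{k})$. Put
\[
 c_k=\frac{4^k}{(2k+1)\binom{2k}k}=\frac{4^k(k!)^2}{(2k+1)!},\qquad g(u)=\sum_{k\ge0}c_ku^k ,
\]
so that the sum in Theorem~\ref{thm:vanishintro} equals $g(\frac12)$. We have $\nu_2\binom{2k}k=s_2(k)$ and $\nu_2(2k+1)=0$, so $\nu_2(c_k)=2k-s_2(k)$. Hence $g$ converges on the disc $D=\{u\in\mathbb C_2:\nu_2(u)>-2\}$. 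This disc contains $0$, $\frac12$ and $1$. The general term at $u=\frac12$ has valuation $k-s_2(k)\to\infty$, so the series converges.

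\emph{The ODE.} Over $\mathbb R$, the function $x\,g(x^2)=\arcsin x/\sqrt{1-x^2}$ satisfies $(1-x^2)f'-xf=1$. I will not use this analytically, only to guess the equation. From the coefficient recursion $(2k+3)c_{k+1}=(2k+2)c_k$ one checks directly, as a formal power series identity, that
\[
 2u(1-u)g'(u)+(1-2u)g(u)=1 .
\]
Set $h(u)=g(u)+g(1-u)$. For $u$ in $D$, the point $1-u$ also lies in $D$, because $\nu_2(1)=0>-2$. Re-expanding $g(1-u)$ around $u=0$ is legitimate in $\mathbb Q_2$ inside the disc of convergence. So $h$ is given by a power series converging on $D$.

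The equation is invariant under $u\mapsto 1-u$: the factor $u(1-u)$ is fixed, while both $1-2u$ and $d/du$ change sign. Hence $g(1-u)$ satisfies the same inhomogeneous equation with right-hand side $1$. Subtracting the two equations for $g$ itself cannot give a homogeneous equation for $h$, since $h$ is a sum, not a difference. The right move is to write $\tilde g(u)=-g(1-u)$. Under the substitution the right-hand side picks up a sign, so $-g(1-u)$ is the solution with right-hand side $1$. Then $h=g-\tilde g$ satisfies the homogeneous equation $2u(1-u)h'+(1-2u)h=0$. Getting this sign bookkeeping right is the one delicate point, and I will verify it carefully.

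\emph{Killing $h$.} Write $h=\sum h_ku^k$. The homogeneous equation gives $h_0=0$ from the constant term. The coefficient of $u^k$ gives $(2k+1)h_k=(2k)h_{k-1}$, so inductively $h_k=0$ for all $k$. Equivalently, the indicial exponent at $0$ is $-\frac12$, so the only solutions are multiples of $(u(1-u))^{-1/2}$, and none of these is analytic at $0$. Therefore $h\equiv0$ on $D$, that is, $g(u)=-g(1-u)$. Setting $u=\frac12$ gives $2g(\frac12)=0$, which proves the theorem.

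As a consistency check, the real value of the same series is $\pi/2\neq0$. This is not a contradiction: the real solution is the branch $(1-s^2)^{-1/2}(C-\arcsin s)$ with $s=1-2u$ and $C=\pi/2$. In $\mathbb Q_2$ the analytic continuation across the whole disc $D$ forces $C=0$.

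The main obstacle I expect is the justification that $g(1-u)$ re-expands into a power series around $0$ convergent on all of $D$, so that $h$ can be treated by the coefficient argument. This holds because $D$ is a disc in an ultrametric field, and any of its points can serve as its centre. A secondary check is verifying the ODE from the recursion $c_{k+1}/c_k=(2k+2)/(2k+3)$.
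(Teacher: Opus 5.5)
Your proof is correct, but it follows a different route from the paper's. One sentence should be deleted. The operator $L=2u(1-u)\frac{d}{du}+(1-2u)$ is anti-invariant under the reflection: if $\psi(u)=\phi(1-u)$ then $(L\psi)(u)=-(L\phi)(1-u)$. So $g(1-u)$ satisfies $L\,g(1-u)=-1$, and your first assertion that it solves the equation with right-hand side $1$ is false. Your correction is the right statement: $-g(1-u)$ solves $L=1$, hence $h=g(u)+g(1-u)$ solves $Lh=0$. That sign is exactly what makes the argument informative, since with the other sign you would only obtain $g(u)=g(1-u)$, which says nothing at $u=\frac12$.

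The paper expands about the symmetric point instead of the endpoint. It uses $c_k=k!/(2k+1)!!$ (your $c_k/2^k$), and its binomial moments satisfy $G(v)=\sum_iS_iv^i=g(\frac{1+v}2)$. The contiguity $(2k+1)c_k=kc_{k-1}$ is your equation rewritten as $(1-v^2)G'-vG=1$, whose coefficients give $(i+1)S_{i+1}=iS_{i-1}$.

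Since $v=0$ is an ordinary point, $S_0$ is a free parameter there. The even chain is then forced to be $S_{2j}=S_0\binom{2j}{j}4^{-j}$. The paper kills $S_0$ by a growth comparison, $\nu_2(S_{2j})=\nu_2(S_0)+s_2(j)-2j$ against the floor $2j-s_2(j)$. In other words, the homogeneous solution $(1-v^2)^{-1/2}$ has too small a $2$-adic radius.

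You work instead at the regular singular point $u=0$. There the same homogeneous solution, which is $(u(1-u))^{-1/2}$ up to a constant, is not analytic, so uniqueness replaces the estimate. Your only $2$-adic input is that $g$ converges on a disc containing $0$, $\frac12$ and $1$.

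Both arguments establish the same intermediate fact: in the paper, $S_{2j}=0$ for all $j$, i.e. $g$ is odd about $u=\frac12$. Your version makes that functional equation the primary statement and shows transparently why the real constant $\pi/2$ becomes $0$. The paper's version stays inside coefficient identities with explicit valuation bounds, and reuses the binomial-moment machinery of Theorem~\ref{thm:Freduction}.

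The re-expansion you flag as the main obstacle can in fact be done in that language. With your $c_k$, set $S'_j=\sum_kc_k\binom{k}{j}$. The contiguity $(2k+1)c_k=2kc_{k-1}$ gives directly $S'_0=-1$ and $(2j+1)S'_j=-2jS'_{j-1}$. Hence $S'_j=(-1)^{j+1}c_j$, which is $g(1-u)=-g(u)$ coefficient by coefficient. The interchange of summation at $u=\frac12$ is justified by $\nu_2\bigl(c_k\binom{k}{j}2^{-j}\bigr)\ge k-s_2(k)$.
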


\begin{theorem}[binomial-moment valuation law]\label{thm:extremalintro}
For every $m\ge0$,
\[
  \nu_2\Bigl(\sum_{j=0}^{m}\binom mj\frac1{2j+1}\Bigr)=m+\nu_2(m+1).
\]
\end{theorem}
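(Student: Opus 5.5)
The plan is to convert the binomial moment into a first-order recurrence, and then to use Theorem~\ref{thm:vanishintro} to rewrite its solution as a $2$-adic tail whose leading term has the valuation we want.

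\emph{Step 1 (recurrence).} Write $S_m=\sum_{j=0}^m\binom mj\frac1{2j+1}=\int_0^1(1+x^2)^m\,dx$. Differentiating $x(1+x^2)^m$ gives $(1+x^2)^m+2mx^2(1+x^2)^{m-1}=(2m+1)(1+x^2)^m-2m(1+x^2)^{m-1}$. Integrating over $[0,1]$ yields
\[
(2m+1)S_m=2^m+2mS_{m-1},\qquad S_0=1.
\]
With $R_m=S_m/2^m$ this becomes $(2m+1)R_m=1+mR_{m-1}$. The claim is then equivalent to $\nu_2(R_m)=\nu_2(m+1)$.

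\emph{Step 2 (homogeneous solution and tail form).} Put $h_k=k!/(2k+1)!!$. Then $(2m+1)h_m=mh_{m-1}$, so $h_m$ solves the homogeneous recurrence. Variation of constants gives
\[
R_m=h_m\sum_{k=0}^m\frac{(2k-1)!!}{k!}.
\]
This finite sum is useless $2$-adically, because its terms have valuation $-\nu_2(k!)$. The idea is instead to express $R_m$ through the convergent tails $Q_m=\sum_{k>m}h_k$, which satisfy $\nu_2(h_k)=\nu_2(k!)\to\infty$. Theorem~\ref{thm:vanishintro} says $Q_{-1}=0$, i.e.\ $\sum_{k\le m}h_k=-Q_m$. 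This is exactly what lets the finite-sum solution be traded for a tail expression anchored at the correct initial value $R_0=1$.

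Concretely, I will look for a representation $R_m=\lambda_m Q_m+\rho_m$ with explicit rational $\lambda_m,\rho_m$. I will verify it by plugging into the recurrence, using $Q_{m-1}=Q_m+h_m$ and $(2m+1)h_m=mh_{m-1}$. The naive ansatz $R_m=cQ_m/h_m$ fails: it leaves the residual $-c(2m+1)$. So a correction of the form $Q_m/h_m$ combined with a weighted partial sum of the $h_k$ (equivalently, a rearranged Abel summation) must be found. The initial condition is then fixed by Theorem~\ref{thm:vanishintro}.

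\emph{Step 3 (valuation).} Once $R_m$ is written as a normalised tail, I expect it to take the shape $u\cdot\bigl(h_{m+1}+h_{m+2}+\cdots\bigr)/h_m$ up to a $2$-adic unit $u$. Since $h_{m+1}/h_m=(m+1)/(2m+3)$, the leading term has valuation exactly $\nu_2(m+1)$. The remaining terms $h_{m+1+i}/h_m$ have valuation $\nu_2\bigl((m+1+i)!/m!\bigr)$. This is strictly larger than $\nu_2(m+1)$ except possibly when the product $(m+1)\cdots(m+1+i)$ contains no extra factor of $2$, which happens only for $i=0$ and for $i=1$ with $m$ even. These short-range coincidences I will handle by pairing consecutive terms, checking that the pair sum still has valuation $\nu_2(m+1)$.

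The main obstacle is Step 2, namely finding the correct closed tail identity. Assuming one exists, the valuation bookkeeping in Step 3 should be routine. If the tail identity resists, the fallback is a direct induction on the recurrence $(2m+1)R_m=1+mR_{m-1}$. That means tracking $R_m$ modulo $2^{\nu_2(m+1)+1}$, with a separate case analysis for $m$ even, where the claim is that $R_m$ is a unit, and $m$ odd. The vanishing theorem would then enter only to control the accumulated constant.
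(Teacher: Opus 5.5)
\textbf{There is a genuine gap.} Step~1 and the variation-of-constants formula are correct, but the argument stops exactly where the work is. Step~2 never produces the tail identity, and the shape you predict for it in Step~3 is false. Since Theorem~\ref{thm:vanishintro} gives $Q_m=-\sum_{k\le m}h_k$, you can simply compute. One finds $Q_1/h_1=-4$ while $R_1=2/3$, and $Q_3/h_3=-80/3$ while $R_3=12/35$. So $\nu_2(Q_m/h_m)$ is $2$ and $4$ where $\nu_2(m+1)$ is $1$ and $2$, and $R_m$ is not a unit multiple of the single tail $Q_m/h_m$.

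Your Step~3 bookkeeping hides this because the parity is reversed. The product $(m+1)(m+2)$ carries no extra factor of $2$ when $m$ is \emph{odd}, not even, and then
\[
\frac{h_{m+1}+h_{m+2}}{h_m}=\frac{(m+1)(3m+7)}{(2m+3)(2m+5)}
\]
has valuation strictly above $\nu_2(m+1)$, since $3m+7$ is even. So the planned pairing fails in the only nontrivial case; $m$ even is immediate, because $1+mR_{m-1}$ is odd. The fallback induction has the same hole: for $m=2^k-1$ it needs $R_{m-1}$ modulo $2^{k+1}$, and nothing in the proposal supplies that ever-growing precision.

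The missing idea is that $R_m$ is a binomial combination of \emph{all} the tails $Q_{-1},\dots,Q_m$, not of one. The paper obtains it from Pfaff's transformation. In your integral language, expand $\bigl(\tfrac{1+x^2}2\bigr)^m=\bigl(1-\tfrac{1-x^2}2\bigr)^m$ and use $\int_0^1(1-x^2)^k\,dx=2^kk!/(2k+1)!!$. This gives $R_m={}_2F_1(-m,1;\tfrac32;\tfrac12)=\sum_k(-1)^k\binom mk h_k$, where your $h_k$ is the paper's $c_k$. Now insert $\binom mk=\sum_{j\le k}(-1)^{k-j}\binom{m+1}{j}$, valid for all $k\ge0$, and exchange the sums, which is legitimate since $h_k\to0$ in $\mathbb Q_2$. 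This yields
\[
R_m=\sum_{j=0}^{m+1}(-1)^j\binom{m+1}{j}Q_{j-1}
=Q_{-1}+(m+1)\sum_{j=1}^{m+1}(-1)^j\binom{m}{j-1}\frac{Q_{j-1}}{j}.
\]
Then $Q_{-1}=0$ removes the only term lacking the factor $m+1$. In the cofactor:
\begin{itemize}
\item the $j=1$ term is $-Q_0=1$;
\item the $j=2$ term contains $Q_1/2=-2/3\in2\Zt$;
\item for $j\ge3$, $\nu_2(Q_{j-1})\ge\nu_2(j!)\ge\nu_2(j)+1$.
\end{itemize}
Hence the cofactor is a $2$-adic unit and $\nu_2(R_m)=\nu_2(m+1)$, which is the paper's argument.
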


\begin{corollary}[extremal unit, conditional on the top layer]\label{cor:extremalintro}
Combined with the top-layer identification
\[
[\ma^r]\kappa_r=\gamma_r\sum_{j=0}^{r-1}\binom{r-1}j\frac1{2j+1},
 \quad \gamma_r=2\binom{2r}r/\bigl((2r-1)16^r\bigr)
 \]
(Proposition~\ref{prop:appell}, established in exact arithmetic for
$r\le20$),
\[
\nu_2\bigl([\ma^r]\kappa_r\bigr)=-2r-\nu_2\bigl((r-1)!\bigr):
\]
the extremal coefficient of $\kappa_r$ in the Bessel basis is a $2$-adic unit after multiplication by $2^{2r+\nu_2((r-1)!)}$, that is, $r-1$ powers of $2$ short of the coarse bound 
\[
E_r=3r-1+\nu_2((r-1)!)
\]
of Theorem~\ref{thm:coarseintro}.
\end{corollary}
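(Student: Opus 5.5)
The plan is to take the top-layer identification of Proposition~\ref{prop:appell} as given. We then evaluate the $2$-adic valuation of each of its two factors separately and add them:
\[
\nu_2\bigl([\ma^r]\kappa_r\bigr)=\nu_2(\gamma_r)+\nu_2\Bigl(\sum_{j=0}^{r-1}\binom{r-1}j\frac1{2j+1}\Bigr).
\]
After that, everything reduces to elementary digit-sum bookkeeping via Legendre's formula. The final claim, that this is $r-1$ short of $E_r$, is simple subtraction.

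\emph{Step 1 (the binomial moment).} Apply Theorem~\ref{thm:extremalintro} with $m=r-1$. This gives valuation $(r-1)+\nu_2(r)$ for the sum. This is the only non-elementary input, and it is already established, so no obstacle arises here.

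\emph{Step 2 (the prefactor).} By Kummer's theorem, $\nu_2\binom{2r}{r}=s_2(r)$, since adding $r+r$ in base $2$ produces exactly $s_2(r)$ carries. The factor $2r-1$ is odd, and $16^r$ contributes $4r$. Hence
\[
\nu_2(\gamma_r)=1+s_2(r)-4r .
\]
Adding the result of Step 1 gives
\[
\nu_2\bigl([\ma^r]\kappa_r\bigr)=s_2(r)+\nu_2(r)-3r .
\]

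\emph{Step 3 (matching the stated form).} Legendre's formula gives $\nu_2((r-1)!)=(r-1)-s_2(r-1)$. Passing from $r-1$ to $r$ turns the trailing $\nu_2(r)$ ones of $r-1$ into zeros and adds one new $1$, so $s_2(r-1)=s_2(r)-1+\nu_2(r)$. Substituting,
\[
\nu_2((r-1)!)=r-s_2(r)-\nu_2(r),
\]
and therefore
\[
-2r-\nu_2((r-1)!)=s_2(r)+\nu_2(r)-3r,
\]
which agrees with Step 2. Consequently $2^{2r+\nu_2((r-1)!)}[\ma^r]\kappa_r\in\Zt^\times$. Finally,
\[
E_r-\bigl(2r+\nu_2((r-1)!)\bigr)=r-1,
\]
which gives the comparison with Theorem~\ref{thm:coarseintro}.

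The only delicate point is the carry identity for $s_2(r-1)$, and it is routine. The genuine content of the corollary lies in Theorem~\ref{thm:extremalintro} and in the conditional top-layer identification, which is why the statement is labelled as conditional.
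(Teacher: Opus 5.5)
Your proposal is correct and is essentially the paper's own argument. You add $\nu_2(\gamma_r)=1+s_2(r)-4r$ (via Kummer) to $\nu_2(I_r)=(r-1)+\nu_2(r)$ from Theorem~\ref{thm:extremalintro} at $m=r-1$, and reconcile the exponents with Legendre's formula and the carry identity $s_2(r-1)=s_2(r)-1+\nu_2(r)$, exactly as the paper does, including the final subtraction $E_r-\bigl(2r+\nu_2((r-1)!)\bigr)=r-1$.
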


The sharp form of the law concerns the \emph{physical} basis $(A,B)$ rather than the Bessel basis $(\ma,\mb)$.  Let
\[
  \ell_r=[A^1B^0]\kappa_r
\]
be the Legendre-tangent coefficients.  The sharp law is $\nu_2(\ell_r)=-E_r$; within this paper it enters as a conjecture, and it is proved in the companion paper~\cite{Ultraspherical}.  Our main structural result replaces it by a single integrality statement for one even power series.

\begin{theorem}[normal form and transfer]\label{thm:transferintro}
Let
\[
g_n=\partial_AL_n(x_0,y_0;A-\tfrac14,B-\tfrac14)\big|_{A=B=0}
\]
be the coefficients of the global Legendre tangent,
\[
\Phi(t)=\sum_{n\ge1}g_nt^n/(n-1)!
\]
its Borel transform, and define
\[
  W(t)=\frac{\sinh 2t}{t^2}\,\operatorname{Im}\Phi(t),\qquad   w_m=[t^{2m}]W(t),
\]
where $\operatorname{Im}\Phi$ is the coefficientwise imaginary part of
$\Phi$.  Then, with $\tau(m)=-2\nu_2\bigl((2m)!\bigr)$,
\[
  \nu_2(\ell_r)\ge-E_r\ \ \forall r   \quad\Longleftrightarrow\quad   \nu_2(w_m)\ge\tau(m)\ \ \forall m ,
\]
and equality transfers index by index.  Equivalently, the sharp law is
\[
  \bigl((2m)!\bigr)^2w_m\in\Zt^\times\qquad\text{for every }m\ge0 .
\]
\end{theorem}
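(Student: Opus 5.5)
The plan is to express both sides through one generating function and show that the passage from $(\ell_r)$ to $(w_m)$ is triangular, with unit diagonal after the stated rescalings. First I will make the Legendre tangent explicit. Differentiating the recursive phase construction in $A$ at $A=B=0$, with $\ma=A-\frac14$ and $\mb=B-\frac14$, linearises the Kummer recursion around the Legendre phase. Evaluating at the midpoint $z=1$ (the point $(x_0,y_0)$) then expresses $\ell_r$ as a finite linear combination of the global tangent coefficients $g_n$:
\[
\ell_r=\sum_{n}c_{r,n}\,g_n
\]
with explicit rational $c_{r,n}$. I expect the $c_{r,n}$ to come from expanding $\Nn^{-(2r-1)}$ against the odd Laurent primitive. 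Passing to the Borel transform $\Phi(t)=\sum g_nt^n/(n-1)!$ should turn this convolution-type relation into a product of generating functions. The antisymmetry $\kappa_r(B,A)=-\kappa_r(A,B)$ and the oddness of the primitives should force only the imaginary part of $\Phi$ to contribute at the midpoint. The factor $\sinh(2t)/t^2$ should be exactly the kernel that absorbs the remaining $r$-dependence, so that $\ell_r$ is read off from $w_m$ and lower coefficients.

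Second, I will write the resulting relation as
\[
\ell_r=\sum_{m\le r-1}T_{r,m}\,w_m
\]
(possibly after reindexing $m=r-1$), and compute the valuations of the $T_{r,m}$. The claim to establish is
\[
\nu_2(T_{r,r-1})=-E_r-\tau(r-1),
\]
so that the diagonal term contributes exactly $\nu_2(w_{r-1})-\tau(r-1)-E_r$. For the off-diagonal terms with $m<r-1$, I need the strict inequality
\[
\nu_2(T_{r,m})+\tau(m)>-E_r.
\]
Given these two facts, induction on $r$ gives both directions of the equivalence. Assume $\nu_2(w_m)\ge\tau(m)$ for all $m<r-1$. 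Then every off-diagonal term has valuation strictly greater than $-E_r$, so $\nu_2(\ell_r)\ge-E_r$ holds iff $\nu_2(w_{r-1})\ge\tau(r-1)$. By the ultrametric inequality, equality in one holds iff equality in the other, which is the index-by-index transfer. The final reformulation is immediate: $\tau(m)=-2\nu_2((2m)!)$, so equality means $((2m)!)^2w_m$ is a $2$-adic unit.

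The main obstacle is the valuation bookkeeping for $T_{r,m}$. These coefficients will involve $(2m)!$, $(r-1)!$, powers of $4$ from $\sinh 2t$, and binomial moments of the type in Theorem~\ref{thm:extremalintro}. I would first compute $T_{r,m}$ in closed form using the Bessel-basis expansions already behind Theorem~\ref{thm:coarseintro}. I would then try to bound the off-diagonal valuations via Legendre's formula $\nu_2(k!)=k-s_2(k)$. If the strict inequality fails for some $m$, my fallback is to group terms by their top $\ma$-layer and to use the coarse bound $2^{E_r}\kappa_r\in\Zt[\ma,\mb]$ from Theorem~\ref{thm:coarseintro} to control the lower contributions uniformly.
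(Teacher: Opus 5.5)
\textbf{Gap: the triangular relation is never identified.} Your inductive skeleton is sound: a lower-triangular relation between $(\ell_r)$ and $(w_m)$, with diagonal entry of valuation exactly $-E_r-\tau(r-1)$ and strictly dominated off-diagonal entries, does give both implications and the index-by-index transfer of equality. But you never find that relation, and the route you propose for finding it is the wrong one.

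\textbf{No recursion is needed.} Since $\kappa_r$ is \emph{defined} by \eqref{eq:kappafromL}, $\ell_r=\partial_A\kappa_r|_{A=B=0}=2^{-(2r-1)}\operatorname{Im}g_{2r-1}$ is a single term; this is \eqref{eq:ltangent}. Going through $\Psi_r(1)$ and odd Laurent primitives would also rest on the amplitude--phase identification of Remark~\ref{rem:identification}, which is verified only for $r\le8$.

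\textbf{The kernel is explicit.} Nor is $\sinh(2t)/t^2$ a kernel to be discovered. It is the factorisation $W=(2\operatorname{Im}\Phi/t)\,S$ with $S(t)=\sinh(2t)/(2t)$ even. So the even coefficients of $W$ see only $b_m=[t^{2m}](2\operatorname{Im}\Phi/t)=2\operatorname{Im}g_{2m+1}/(2m)!=2^{2m+2}\ell_{m+1}/(2m)!$, and the even-index $g_{2k}$ drop out by parity. Your general $\sum_nc_{r,n}g_n$ does not address this, and without it the relation would not close on $(w_m)$. From $2\operatorname{Im}\Phi/t=WD$ with $D=2t/\sinh 2t=\sum_kd_kt^{2k}$ one gets $\ell_r=\frac{(2r-2)!}{4^r}\sum_{m\le r-1}d_{r-1-m}w_m$. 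Hence $T_{r,m}=\frac{(2r-2)!}{4^r}d_{r-1-m}$.

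\textbf{The missing estimate.} What then carries the proof is absent from your proposal: $\nu_2(d_k)=s_2(k)$ for $k\ge1$, together with subadditivity $s_2(m)\le s_2(m-k)+s_2(k)$. The first needs von Staudt--Clausen, via $d_k=(2-2^{2k})B_{2k}2^{2k}/(2k)!$; Legendre's formula alone does not give it. With $m=r-1$ the diagonal gives $\nu_2(T_{r,r-1})=-2-s_2(r-1)=-E_r-\tau(r-1)$. Under $\nu_2(w_{m-k})\ge\tau(m-k)$, the $k$-th off-diagonal term exceeds $-E_r$ by at least $s_2(k)+4k+2s_2(m-k)-2s_2(m)\ge4k-s_2(k)>0$.

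\textbf{Your anticipated tools and fallback do not apply.} The binomial moments $J_m$ and Bessel-basis expansions play no role. The fallback cannot work either. The transfer is a formal statement about two sequences linked by a fixed convolution, so no information about $\kappa_r$ can replace the kernel estimate. Moreover, $2^{E_r}\kappa_r\in\Zt[\ma,\mb]$ does not even bound $\ell_r$ at level $-E_r$, because re-expanding $\ma=A-\frac14$, $\mb=B-\frac14$ at $A=B=0$ creates further powers of $4$ in denominators.

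\textbf{A shortcut your scheme allows.} Your induction would let you avoid von Staudt--Clausen altogether. Run it on $w_m=\sum_ke_kb_{m-k}$ with $e_k=4^k/(2k+1)!$, where $\nu_2(e_k)=s_2(k)$ is just Legendre's formula. The paper instead convolves in both directions and uses Lemma~\ref{lem:weights} for both $S$ and $D$.
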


The passage $\Phi\mapsto W$ is an infinite resummation which is nevertheless exact on valuations; this is the first transformation in this problem with that property, and Section~\ref{sec:normalform} explains why every earlier, local, attempt failed.

\begin{conjecture}[W]\label{conj:W}
$\bigl((2m)!\bigr)^2w_m$ is a $2$-adic unit for every $m\ge0$.
\end{conjecture}

\medskip\noindent\emph{Note added.}  Conjecture~\ref{conj:W} (and with it, by Theorem~\ref{thm:transferintro}, the sharp law $\nu_2(\ell_r)=-E_r$ for every $r$) is \emph{proved} in~\cite{Ultraspherical}: the transverse tangent is computed there in closed form (a trigamma kernel for its real part, an explicit Genocchi--Bernoulli central-difference quadrature for its imaginary part, proved in all orders with complete sectorial estimates), and an elementary $2$-adic dominance argument  (a Genocchi-form exponent whose unique $2$-adically dominant partition is the all-ones one) gives $\nu_2(w_m)=-2\nu_2((2m)!)$ for every $m$.  The statements and proofs of the present paper are unchanged and self-contained; where the text below refers to the conjecture as open, this note supersedes it.

The top-layer identity used to convert the binomial-moment valuation into the extremal-unit statement is established by exact arithmetic for $r\le20$. Beyond that range, the extremal-unit conclusion is conditional on that identity, whereas the binomial-moment valuation law itself holds in every order.

Conjecture~\ref{conj:W} is proved in~\cite{Ultraspherical}; independently of that proof, it is verified here for $m\le46$, equivalently $\nu_2(\ell_r)=-E_r$ for $r\le47$, and $\nu_2(\ell_r)=-E_r$ is checked directly for $r\le48$ (Section~\ref{sec:evidence}).  We also record a negative result which we believe is the most informative experimental fact about $W$: no linear ODE with polynomial coefficients, no $P$-recursion and no algebraic equation of size up to $35$ annihilates $W$ or its factorially normalised transform.  A Dwork--Frobenius integrality argument~\cite{Dwork,Nemoto}  therefore cannot be started by guessing an operator.

\subsection{Reproducibility}

All statements below marked \emph{verified} or \emph{certified} are exact computations in rational or Gaussian-rational arithmetic; none is floating point.  The code is reproduced in Section~\ref{sec:code} and supplied as a package.

\section{Setting}\label{sec:setting}

\subsection{The inverse-factorial solution and the Horn recurrence}

Let 
\[
t=e^{i\theta}, \quad w=2/(t-1), \quad \widetilde w=2/(t+1), 
\]
and let
\[
  a_k(\nu)=\frac{(\frac12+\nu)_k(\frac12-\nu)_k}{(-2)^kk!} =\prod_{j=0}^{k-1}\frac{4\nu^2-(2j+1)^2}{8^kk!}
\]
be the Hankel coefficients.  The Jacobi asymptotics at large degree is governed by the inverse-factorial series
\begin{equation}\label{eq:invfact}
  F=\sum_{n\ge0}\frac{G_n}{(Z-1)(Z-2)\cdots(Z-n)},\qquad   G_n=\sum_{l=0}^na_l(\alpha)a_{n-l}(\beta)(-w)^l\widetilde w^{\,n-l},
\end{equation}
with $Z=2\Nn$.  Introduce the Horn variables
\[
  x=-\frac1{t-1}=-\frac w2,\qquad y=\frac1{t+1}=\frac{\widetilde w}2 ;
\]
at the midpoint $\theta=\pi/2$ one has $t=i$ and
\[
  x_0=\frac{1+i}2,\qquad y_0=\frac{1-i}2 ,
\]
so that $-w=1+i$ and $\widetilde w=1-i$ there.  Writing $L=\log F$ and $L=\sum_{n\ge1}L_nZ^{-n}$, we take as the working definition of the phase constants
\begin{equation}\label{eq:kappafromL}
  \kappa_r=2^{-(2r-1)}\operatorname{Im}L_{2r-1}(x_0,y_0).
\end{equation}

\begin{remark}[amplitude--phase identification]\label{rem:identification}
Formula \eqref{eq:kappafromL} is the amplitude--phase dictionary of the Liouville normal form: the midpoint value of the odd Laurent primitive of the $r$-th phase density is carried by the imaginary part of the resummed amplitude logarithm.  In this paper \eqref{eq:kappafromL} is taken as the definition of $\kappa_r$; its identification with the constants $\Psi_r(1)$ of \eqref{eq:phase} is verified in exact rational arithmetic for $r\le8$ against an independent Stirling--Newton implementation of the phase recursion (both engines are included in the package), and the values $\kappa_1,\kappa_2,\kappa_3$ displayed in the introduction agree with both.
\end{remark}

The resummation of \eqref{eq:invfact} is a Kamp\'e de F\'eriet double series in $(x,y)$; it satisfies a Horn system, and its logarithm satisfies a Riccati form which is \emph{division-free}.  Put 
\[
U=\Theta_xL, \quad V=\Theta_yL, \quad T=U+V,
\]
with $\Theta_x=x\partial_x$, $\Theta_y=y\partial_y$.

\begin{theorem}[integral logarithmic recurrence]\label{thm:hornrec}
Let
\[
U=\sum_{n\ge1}U_n\zeta^n, \quad V=\sum_{n\ge1}V_n\zeta^n, \quad \zeta=Z^{-1}.
\]
Then, for $n\ge0$,
\begin{equation}\label{eq:hornrecc}
  U_{n+1}=\Theta_xT_n+\sum_{a+b=n}U_aT_b -x\Bigl(\Theta_xU_n+\sum_{a+b=n}U_aU_b+U_n-\ma\delta_{n,0}\Bigr),
\end{equation}
and symmetrically for $V$ with $x\leftrightarrow y$, $\ma\leftrightarrow\mb$
(the sums run over $a,b\ge1$).  Consequently
\[
U_n,V_n\in\mathbb Z[\ma,\mb][x,y]
\]
for every $n$, and the support obeys $1\le a+b\le c+d\le n$ for every monomial $x^cy^d\ma^a\mb^b$ occurring in $U_n$ or $V_n$.
\end{theorem}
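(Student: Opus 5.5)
The plan is to derive a \emph{linear} Horn-type equation for $F$ in the variable $x$, then substitute $F=e^L$ and read off coefficients in $\zeta=Z^{-1}$. Integrality and the support bound will then follow by induction on $n$.

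\emph{Step 1: normalise the double series.} Since $-w=2x$ and $\widetilde w=2y$, we have $G_n=\sum_{l+m=n}2^la_l(\alpha)\,2^ma_m(\beta)\,x^ly^m$. Using $(4\nu^2-(2j+1)^2)/4=\nu^2-\frac14-j(j+1)$, one gets
\[
2^ka_k(\nu)=\frac1{k!}\prod_{j=0}^{k-1}\bigl(\mu-j(j+1)\bigr),\qquad \mu=\nu^2-\tfrac14 .
\]
Hence $F=\sum_{l,m\ge0}t_{l,m}$ with
\[
t_{l,m}=\frac{p_l(\ma)\,p_m(\mb)}{l!\,m!}\,\frac{x^ly^m}{(Z-1)\cdots(Z-l-m)},\qquad p_k(\mu)=\prod_{j<k}\bigl(\mu-j(j+1)\bigr).
\]
The term ratio gives
\[
(l+1)(Z-l-m-1)\,t_{l+1,m}=\bigl(\ma-l(l+1)\bigr)\,x\,t_{l,m}.
\]
On $t_{l+1,m}$ the operator $\Theta_x$ acts by $l+1$ and $Z-\Theta_x-\Theta_y$ acts by $Z-l-m-1$. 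Summing over $l,m$, and noting that the $x^0$ terms are killed on the left by $\Theta_x$ while the right carries a factor $x$, yields
\[
\Theta_x(Z-\Theta_x-\Theta_y)F=x\bigl(\ma-\Theta_x(\Theta_x+1)\bigr)F .
\]
The analogous equation holds in $y$. The step needing most care is this boundary bookkeeping, together with the justification that everything is an identity of formal power series in $\zeta$ with coefficients in $\mathbb Q[\ma,\mb][x,y]$. This works because $1/((Z-1)\cdots(Z-k))=\zeta^k(1+O(\zeta))$, so $F=1+O(\zeta)$ and $L=\log F$ has $L_0=0$.

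\emph{Step 2: Riccati form.} With $F=e^L$ we have
\[
\Theta_xF=UF,\qquad \Theta_x^2F=(\Theta_xU+U^2)F,\qquad \Theta_x\Theta_yF=(\Theta_xV+UV)F .
\]
Dividing by $F$ gives
\[
ZU-\Theta_xT-UT=-x\bigl(\Theta_xU+U^2+U-\ma\bigr),
\]
that is,
\[
U=\zeta\bigl[\Theta_xT+UT-x(\Theta_xU+U^2+U-\ma)\bigr].
\]
Taking the coefficient of $\zeta^{n+1}$ gives exactly \eqref{eq:hornrecc}. Since $U_0=V_0=0$, the convolution sums run over $a,b\ge1$, and the inhomogeneous term $\ma$ contributes only at $n=0$. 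The $V$ recurrence is obtained by the symmetric computation in $y$.

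\emph{Step 3: integrality and support.} Both follow by induction on $n$, since the right-hand side of \eqref{eq:hornrecc} uses only $\Theta_x$, products, multiplication by $x$, and the seed $x\ma$. All of these preserve $\mathbb Z[\ma,\mb][x,y]$, so no division ever occurs. For the support, the base case $U_1=x\ma$ and $V_1=y\mb$ has $a+b=c+d=1$. In the inductive step:
\begin{itemize}
\item $\Theta_x$ preserves monomials.
\item Products $U_aT_b$ with $a+b=n$ add both $\mu$-degrees and $(x,y)$-degrees, so $1\le a'+b'\le c'+d'\le n$.
\item Multiplication by $x$ raises $c+d$ by one, giving a bound of $n+1$.
\end{itemize}
The lower bound $a+b\ge1$ persists because every term is built from factors each carrying at least one $\mu$.
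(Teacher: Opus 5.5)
Your proposal is correct and follows the paper's proof step for step. Both arguments run: the Kamp\'e de F\'eriet form of $F$; the Horn equation read off from the term ratio; the substitution $F=e^L$, giving the division-free Riccati form $ZU=\Theta_xT+UT-x(\Theta_xU+U^2+U-\mu_\alpha)$; extraction of $\zeta$-coefficients; and induction for integrality and support.

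The only difference is cosmetic. You normalise the coefficients directly as $p_k(\mu)/k!$ with $p_k(\mu)=\prod_{j<k}(\mu-j(j+1))$. Your equation $\Theta_x(Z-\Theta_x-\Theta_y)F=x(\mu_\alpha-\Theta_x(\Theta_x+1))F$ is therefore the paper's $\Theta_x(\Theta_x+\Theta_y-Z)F=x((\Theta_x+\frac12)^2-A)F$ multiplied by $-1$, with the passage to $\mu_\alpha=A-\frac14$ built in from the start.
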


\begin{proof}
Since $1/\bigl((Z-1)\cdots(Z-n)\bigr)=(-1)^n/(1-Z)_n$ and
\[
  a_\mu(\alpha)\,(2x)^\mu(-1)^\mu
  =\frac{(\tfrac12-\alpha)_\mu(\tfrac12+\alpha)_\mu}{\mu!}\,x^\mu ,
\]
term by term in \eqref{eq:invfact}, the series $F$ is the double hypergeometric series
\begin{equation}\label{eq:KdF}
  F=\sum_{\mu,\nu\ge0}
  \frac{(\tfrac12-\alpha)_\mu(\tfrac12+\alpha)_\mu
        (\tfrac12-\beta)_\nu(\tfrac12+\beta)_\nu}
       {(1-Z)_{\mu+\nu}\,\mu!\,\nu!}\;x^\mu y^\nu ,
\end{equation}
an identity of formal series whose coefficients are rational functions of $Z$.  Write $F_{\mu\nu}$ for the coefficient of $x^\mu y^\nu$ in
\eqref{eq:KdF}.  From
\[
  \frac{F_{\mu+1,\nu}}{F_{\mu\nu}}
  =\frac{(\tfrac12-\alpha+\mu)(\tfrac12+\alpha+\mu)}
        {(1-Z+\mu+\nu)(\mu+1)}
  =\frac{(\mu+\tfrac12)^2-A}{(1-Z+\mu+\nu)(\mu+1)}
\]
one reads off, comparing coefficients of $x^{\mu+1}y^\nu$, the partial differential equation
\begin{equation}\label{eq:hornpde}
  \Theta_x\bigl(\Theta_x+\Theta_y-Z\bigr)F
  =x\Bigl(\bigl(\Theta_x+\tfrac12\bigr)^2-A\Bigr)F ,
\end{equation}
and symmetrically in $y$ with $B$.  Substituting $F=e^L$ and dividing by $F$ (legitimate, since $F=1+O(\zeta)$ as a formal series), one has $(\Theta_x+\Theta_y)F=TF$, $\Theta_x(TF)=(\Theta_xT+TU)F$, $\Theta_x(ZF)=ZUF$ and $(\Theta_x+\tfrac12)^2F=(\Theta_xU+U^2+U+\tfrac14)F$, so \eqref{eq:hornpde} becomes the division-free Riccati form \begin{equation}\label{eq:riccati}
  ZU=\Theta_xT+UT-x\bigl(\Theta_xU+U^2+U-\ma\bigr),
\end{equation}
using $A-\tfrac14=\ma$.  Comparing coefficients of $\zeta^n$ (the left side has $\zeta^n$-coefficient $U_{n+1}$) gives \eqref{eq:hornrecc}, with $U_1=x\ma$, $V_1=y\mb$ at $n=0$.  Integrality and support follow by induction: they hold for $U_1,V_1$; every operation on the right-hand side of \eqref{eq:hornrecc}  ($\Theta_x$, multiplication by $x$, integer sums and products) preserves $\mathbb Z[\ma,\mb][x,y]$; in a product the $\mu$-degrees and the radial degrees both add, so $a+b\le c+d$ is preserved; $\Theta_x$ preserves both degrees, the outer multiplication by $x$ raises the radial degree by one, and every term of $U_{n+1}$ has radial degree at most $n+1$ and $\mu$-degree at least $1$.
\end{proof}

No division occurs anywhere in the recurrence; this is what makes the denominators of $\kappa_r$ accessible.  Since $L_n=T_n/(c+d)$ coefficientwise, all denominators are produced by the single division by the radial degree $c+d$ and by the evaluation at $(x_0,y_0)$.

\subsection{The midpoint cost}

Let $K_{c,d}=x_0^cy_0^d/(c+d)$ be the weight attached to a monomial of radial degree $s=c+d$.

\begin{lemma}[exact midpoint cost]\label{lem:cost}
It holds
\[
\nu_2\bigl(\operatorname{Im}K_{c,d}\bigr)=-\omega(s)
\]
whenever $\operatorname{Im}(x_0^cy_0^d)\ne0$, where $\omega(s)=\lceil s/2\rceil+\nu_2(s)$.
\end{lemma}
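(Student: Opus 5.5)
The plan is to compute $x_0^c y_0^d$ exactly in polar form and then divide by $s$. The key fact is that $x_0=\frac{1+i}{2}=\frac{1}{\sqrt2}e^{i\pi/4}$ and $y_0=\overline{x_0}=\frac1{\sqrt2}e^{-i\pi/4}$. Hence
\[
x_0^cy_0^d=2^{-s/2}e^{i\pi(c-d)/4},\qquad s=c+d,
\]
and so
\[
\operatorname{Im}(x_0^cy_0^d)=2^{-s/2}\sin\frac{\pi(c-d)}4 .
\]
Since $1/s$ is real, $\operatorname{Im}K_{c,d}=\operatorname{Im}(x_0^cy_0^d)/s$. This gives $\nu_2(\operatorname{Im}K_{c,d})=\nu_2(\operatorname{Im}(x_0^cy_0^d))-\nu_2(s)$, and the lemma reduces to showing that a nonzero $\operatorname{Im}(x_0^cy_0^d)$ has valuation exactly $-\lceil s/2\rceil$.

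The second step is a parity split on $c-d$, which is congruent to $s$ mod $2$.

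If $s$ is even, then $c-d$ is even and $\sin(\pi(c-d)/4)\in\{0,\pm1\}$. In the nonzero case the imaginary part is $\pm2^{-s/2}$, with valuation $-s/2=-\lceil s/2\rceil$.

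If $s$ is odd, then $\sin(\pi(c-d)/4)=\pm\frac{1}{\sqrt2}$, which is never zero. The imaginary part is then $\pm2^{-(s+1)/2}$, with valuation $-(s+1)/2=-\lceil s/2\rceil$.

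A cleaner way to avoid square roots is to write $x_0^cy_0^d=(x_0y_0)^{\min(c,d)}x_0^{|c-d|}$ or its conjugate. Here $x_0y_0=\frac12$ and $x_0^2=\frac i2$. Then $x_0^{2q}=(i/2)^q$, and $x_0^{2q+1}=(i/2)^q\cdot\frac{1+i}{2}$. This exhibits the imaginary part as $\pm2^{-\lceil s/2\rceil}$ or $0$ purely in $\mathbb Q(i)$, which I would present as the actual proof.

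No step is a real obstacle. The only care needed is bookkeeping: the real and imaginary parts of $(1+i)/2$ each have valuation $-1$, which produces the ceiling in the odd case. One must also note that $\nu_2$ is applied to the rational number $\operatorname{Im}K_{c,d}$, so dividing by $s$ subtracts exactly $\nu_2(s)$. Combining the two contributions gives $-\lceil s/2\rceil-\nu_2(s)=-\omega(s)$.
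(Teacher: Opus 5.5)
Your proof is correct and follows essentially the paper's route. The paper likewise factors $x_0^cy_0^d=2^{-s+k}(1\pm i)^m$ with $k=\min(c,d)$ and $m=|c-d|$, reads off $\operatorname{Im}(1\pm i)^m=\pm2^{m/2}\sin(m\pi/4)$ according to the parity of $m$, and then subtracts $\nu_2(s)$ for the division by $s$; your $\mathbb Q(i)$ variant via $x_0y_0=\tfrac12$, $x_0^2=\tfrac i2$ is the same factorisation with the square roots removed.
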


\begin{proof}
Write $k=\min(c,d)$ and $m=|c-d|$, so $s=2k+m$.  Since $(1+i)(1-i)=2$,
\[
  x_0^cy_0^d=2^{-s}(1+i)^c(1-i)^d=2^{-s+k}\,\zeta^{m},  \qquad \zeta=1\pm i ,
\]
and $\zeta^m=2^{m/2}e^{\pm im\pi/4}$ has $\operatorname{Im}\zeta^m=\pm2^{m/2}\sin(m\pi/4)$.  Hence
\[
\nu_2(\operatorname{Im}\zeta^m) = \begin{cases} \frac{m-1}2,  & m \text{ odd}, \\  \frac m2, & m\equiv2\pmod4, \end{cases}
\]
  and $\operatorname{Im}\zeta^m=0$ for $m\equiv0\pmod4$.  In the first case $s$ is odd and
\[
  \nu_2\bigl(\operatorname{Im}x_0^cy_0^d\bigr)  =-s+k+\frac{m-1}2=-\frac{s+1}2=-\Bigl\lceil\frac s2\Bigr\rceil ,
\]
while $\nu_2(s)=0$; in the second case $s$ is even and
\[
\nu_2(\operatorname{Im}x_0^cy_0^d)=-s+k+\frac m2=-\frac s2 =-\lceil s/2\rceil.
\]
Dividing by $s$ subtracts a further $\nu_2(s)$ in both cases, which is the assertion.
\end{proof}

\begin{lemma}\label{lem:omega}
$\max_{1\le s\le 2r-1}\omega(s)\le H_r=r+\nu_2\bigl((r-1)!\bigr)$.
\end{lemma}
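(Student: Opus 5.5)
We need to show that for every $s$ with $1\le s\le 2r-1$ we have $\lceil s/2\rceil+\nu_2(s)\le r+\nu_2((r-1)!)$. The plan is to bound $\lceil s/2\rceil$ and $\nu_2(s)$ separately, absorbing the second into $\nu_2((r-1)!)$, and to split into cases according to the parity of $s$.

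\emph{Odd $s$.} Here $\nu_2(s)=0$ and $\lceil s/2\rceil=(s+1)/2\le r$. Hence $\omega(s)\le r\le H_r$, and there is nothing more to do.

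\emph{Even $s$.} Write $s=2u$ with $1\le u\le r-1$; this uses $s\le 2r-1$, so $s\le 2r-2$. Then
\[
\omega(s)=u+1+\nu_2(u).
\]
It remains to show $u+1+\nu_2(u)\le r+\nu_2((r-1)!)$ for $1\le u\le r-1$. Since $u\le r-1$, we have $u+1\le r$, and $u$ is one of the factors of $(r-1)!$, so $\nu_2(u)\le\nu_2((r-1)!)$. Adding these two inequalities gives the bound.

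Both cases are elementary, and the only point needing care is the reduction $u\le r-1$ in the even case, which is where the range $s\le 2r-1$ enters. The bound is in fact fairly loose: equality $\omega(s)=H_r$ can occur, for instance at $s=2r-1$ when $\nu_2((r-1)!)=0$, that is for $r\le 2$. The lemma only asks for an upper bound, however, so no sharpness analysis is needed.
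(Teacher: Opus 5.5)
Your proof is correct. The odd case is handled exactly as in the paper, but in the even case you take a genuinely more direct route.

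For even $s$, the paper first rewrites $H_r=2r-1-s_2(r-1)$ using Legendre's formula. It then bounds $\nu_2(s)\le\log_2 s\le\log_2(2r-2)$, which reduces the claim to $\log_2 m+s_2(m)\le m$ with $m=r-1$. Via $s_2(m)\le\lfloor\log_2 m\rfloor+1$, that inequality is only guaranteed for $m\ge7$, so the paper checks the cases $r\le7$ against a table of $H_r$ and $\max_s\omega(s)$.

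Your observation is that $u=s/2$ is itself one of the factors of $(r-1)!$, so $\nu_2(u)\le\nu_2((r-1)!)$. Adding this to $u+1\le r$ gives the bound in one line, uniformly in $r$. You need no Legendre formula, no digit-sum estimate and no finite verification, so your argument is strictly simpler for this lemma.

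The paper's route buys only incidental extras. One is the explicit form $H_r=2r-1-s_2(r-1)$, which reappears in the transfer theorem as $-E_{m+1}=-4m-2+s_2(m)$. The other is the table of actual maxima, which shows where the bound is attained.

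On that point, your closing remark about sharpness is slightly muddled. Equality holds not only for $r\le2$ at $s=2r-1$, but also for $r=3$ at $s=4$, where $\omega(4)=2+2=4=H_3$. So the bound is tight for $r\le3$ and becomes loose only afterwards. This has no bearing on the proof.
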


\begin{proof}
By Legendre's formula
\[
H_r=r+\bigl(r-1-s_2(r-1)\bigr)=2r-1-s_2(r-1).
\]
If $s$ is odd then $\nu_2(s)=0$ and $\omega(s)=\lceil s/2\rceil\le r$, while $H_r\ge r$ because $s_2(r-1)\le r-1$.  If $s$ is even then $s\le2r-2$, so
$\omega(s)=\frac s2+\nu_2(s)\le(r-1)+\log_2 s\le(r-1)+\log_2(2r-2)$, and it suffices that
\[
  \log_2(2r-2)+s_2(r-1)\le r ,
  \qquad\text{i.e.}\qquad \log_2m+s_2(m)\le m\quad(m=r-1).
\]
Since $s_2(m)\le\lfloor\log_2m\rfloor+1$, this holds as soon as $2\log_2m+1\le m$, that is for $m\ge7$; the finitely many cases $r\le7$ are
checked directly ($H_r=1,2,4,5,8,9,11$ against $\max_s\omega(s)=1,2,4,4,7,7,8$).
\end{proof}

Theorem~\ref{thm:coarseintro} follows: by Theorem~\ref{thm:hornrec} the numerators are integers, and by Lemmas~\ref{lem:cost}--\ref{lem:omega} the only denominators are $2^{2r-1}$ from \eqref{eq:kappafromL} and $2^{\omega(s)}\le2^{H_r}$ from the midpoint, whence $2^{(2r-1)+H_r}\kappa_r=2^{E_r}\kappa_r\in\Zt[\ma,\mb]$.

Theorem~\ref{thm:odd} follows from the same source: the only odd primes introduced are those of the radial divisor $s=c+d\le2r-1$, and the imaginary part is nonzero only for odd $s$, so the odd part of the denominator divides $\operatorname{lcm}(1,3,\dots,2r-1)$.

\section{The extremal coefficient and a \texorpdfstring{$2$}{2}-adic vanishing}
\label{sec:extremal}

\subsection{The top layer}

The coefficient of the extremal monomial $\ma^r$ in $\kappa_r$ is governed by an Appell layer.  Write
\[
  I_r=\int_0^1(1+t^2)^{r-1}dt=\sum_{j=0}^{r-1}\frac1{2j+1}\binom{r-1}j .
\]

\begin{proposition}[top layer; verified]\label{prop:appell}
$[\ma^r]\kappa_r=\gamma_rI_r$ and $[\mb^r]\kappa_r=-\gamma_rI_r$, with 
\[
\displaystyle\gamma_r=\frac{2\binom{2r}r}{(2r-1)16^r}.
\]
\end{proposition}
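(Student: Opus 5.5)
The plan is to prove the identity in all orders by isolating, in the division-free Riccati form \eqref{eq:riccati}, the part of $L$ that carries the extremal monomial. By the antisymmetry $\kappa_r(B,A)=-\kappa_r(A,B)$, the second formula follows from the first, so only $[\ma^r]\kappa_r$ needs to be computed.

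\emph{Reducing to one variable.} Setting $\mb=0$ discards every monomial with $b\ge1$. The support bound of Theorem~\ref{thm:hornrec} shows that a monomial $\ma^r$ in $L_{2r-1}$ has maximal $\ma$-degree relative to its $\zeta$-order. I therefore expect it to be produced by a weighted-homogeneous ``eikonal'' layer: the part of $\zeta L$ that is a function of $u=\ma\zeta^2$ (note $\ma^r\zeta^{2r-1}=\zeta^{-1}u^r$). To extract it, substitute $U=Z\,p(x,y;u)+O(1)$ and $V=Z\,q+O(1)$ into \eqref{eq:riccati} and keep only the terms of order $Z^2$. This gives
\[
 p=p(p+q)-xp^2+xu,\qquad q=q(p+q)-yq^2 ,
\]
and the formal branch compatible with $V_1=y\mb=0$ is $q=0$. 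The layer is then governed by the quadratic $(1-x)p^2-p+xu=0$, that is,
\[
 p=\frac{1-\sqrt{1-4x(1-x)u}}{2(1-x)} ,
\]
the branch being the one that is $O(u)$. Before using this, I will check that the terms dropped (namely $\Theta_xT$, $x\Theta_xU$ and $xU$) contribute only to lower $\ma$-degree at fixed $\zeta$-order. This is a degree count in the recurrence \eqref{eq:hornrecc}, and the verified values $r\le20$ serve as a consistency check.

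\emph{Recovering $L$ and evaluating.} Next, $L$ is recovered from $U$ through $L_n=T_n/(c+d)$, i.e.\ by inverting the radial Euler operator $\Theta_x+\Theta_y$ coefficientwise. On the eikonal layer this is a radial integral $\int_0^1 p(\rho x,\rho y;u)\,d\rho/\rho$, combined with the shift in the $\zeta$-weight. I then expand $\sqrt{1-4x(1-x)u}$ in powers of $u$. The coefficient of $u^r$ is proportional to $\binom{2r}{r}\bigl(x(1-x)\bigr)^r/\bigl((2r-1)4^r\bigr)$, which already explains the factor $\binom{2r}r/(2r-1)$ in $\gamma_r$. The remaining factor $(1-x)^{-1}$, expanded as a geometric series and integrated radially, should reorganise into $\int_0^1(1+t^2)^{r-1}dt=I_r$ once it is evaluated at $x_0=(1+i)/2$. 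Here one uses $1-x_0=y_0$ and $x_0y_0=\tfrac12$, so $x_0(1-x_0)=\tfrac12$, and $x_0/(1-x_0)=i$. Finally, applying $\operatorname{Im}$ and the factor $2^{-(2r-1)}$ from \eqref{eq:kappafromL} should produce the remaining powers of $2$, giving $16^{-r}$ together with the factor $2$ in $\gamma_r$.

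\emph{Main obstacle.} The step I expect to be hardest is the last one: turning the radial primitive of the eikonal, evaluated at the complex point $x_0$ and followed by $\operatorname{Im}$, into the real integral $I_r$. I would first try the substitution $\rho x_0=\tfrac12(1+it)$-type (parametrising the segment by $t$), so that $1-4\rho x_0(1-\rho x_0)u$ becomes $1+t^2$ up to scaling. I would then check the resulting identity against the tabulated $[\ma^r]\kappa_r$ for small $r$ to fix normalisations. A secondary difficulty is the rigorous justification that the lower-order Riccati terms never feed into the top $\ma$-degree. I would handle this by an induction on $n$ in \eqref{eq:hornrecc}, tracking the quantity $2a-n$ along the lines of the support argument of Theorem~\ref{thm:hornrec}.
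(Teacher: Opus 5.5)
Your approach is correct, and it differs from the paper's, which contains no proof of this proposition. The paper establishes it only by exact computation: Lagrange extraction of the leading coefficient of $\kappa_r(A,\tfrac14)$ for $r\le20$, and the full top layer for $r\le8$. Corollary~\ref{cor:extremal} is therefore conditional beyond $r=20$. Isolating the weight-one (eikonal) layer of the Riccati recurrence gives the identity in every order. Your argument would thus make Corollary~\ref{cor:extremal}, and the lower bound of Remark~\ref{rem:besselgap}, unconditional. Carried out, the argument is short, but several steps are misstated and should be fixed.

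\emph{The degree bound.} The bound you need is not the support bound of Theorem~\ref{thm:hornrec}, which only gives $i+j\le n$ for a monomial $x^cy^d\ma^i\mb^j$ of $U_n$. You need $w:=2(i+j)-n\le1$, proved by induction on \eqref{eq:hornrecc}:
\begin{enumerate}[(i)]
\item $\Theta_xT_n$, $x\Theta_xU_n$ and $xU_n$ lower $w$ by one;
\item a product $U_aT_b$ has weight $w_1+w_2-1$;
\item the source $x\ma$ has weight one.
\end{enumerate}
This is the load-bearing step, not a consistency check, because it is exactly what makes the weight-one layer closed.

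\emph{The reduction at $\mb=0$.} There $V\equiv0$ and $U=U(x)$ exactly, by induction on \eqref{eq:hornrecc}; equivalently, \eqref{eq:KdF} collapses to ${}_2F_1(\tfrac12-\alpha,\tfrac12+\alpha;1-Z;x)$. So $q=0$ is forced rather than chosen, and $L_n=\int_0^xU_n(s)\,ds/s$ with no shift in $\zeta$. The weight-one part $p_r\ma^r$ of $U_{2r-1}$ then obeys the Catalan convolution $p_r=(1-x)\sum_{a+b=r}p_ap_b$ with $p_1=x$. Hence
\[
 p_r=C_{r-1}\,x^r(1-x)^{r-1},\qquad C_{r-1}=\frac{\binom{2r}r}{2(2r-1)},
\]
which agrees with your square-root formula. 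The factor $(1-x)^{-1}$ simply cancels, so no geometric series is needed.

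\emph{The evaluation at $x_0$.} The step you flag as the main obstacle is easy, but not through $\rho x_0=\tfrac12(1+it)$: the radial segment $\{\rho x_0\}$ is not the segment $\{\tfrac12(1+it)\}$. Since $(s(1-s))^{r-1}$ is a real polynomial, integrate along $0\to\tfrac12\to x_0$. The real segment contributes nothing to the imaginary part. On $s=\tfrac12(1+it)$ one has $s(1-s)=\tfrac14(1+t^2)$ and $ds=\tfrac i2\,dt$. Hence
\[
 \operatorname{Im}\int_0^{x_0}\bigl(s(1-s)\bigr)^{r-1}ds=2^{1-2r}I_r,\qquad
 [\ma^r]\kappa_r=2^{1-2r}\cdot C_{r-1}\cdot2^{1-2r}I_r=\gamma_rI_r .
\]

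\emph{The antisymmetry.} The paper asserts antisymmetry for the phase constants $\Psi_r(1)$, not for the working definition \eqref{eq:kappafromL}. For the latter it follows from three facts: the $(x,\alpha)\leftrightarrow(y,\beta)$ symmetry of \eqref{eq:KdF}, the rational coefficients of $L_n$, and $y_0=\overline{x_0}$. Alternatively, rerun the computation at $\ma=0$; there $\operatorname{Im}y_0<0$ supplies the sign in $[\mb^r]\kappa_r=-\gamma_rI_r$.
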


Proposition~\ref{prop:appell} is established by exact computation: for $r\le20$ by Lagrange extraction of the leading coefficient of $\kappa_r(A,\tfrac14)$, a polynomial of degree $r$ in $\ma$ (at $\beta=\frac12$ one has $\mb=0$), from exact evaluations of \eqref{eq:kappafromL} at $r+1$ rational parameter values, and for $r\le8$ for the full degree-$r$ homogeneous layer of $\kappa_r(A,B)$.  It is the only computationally established input in this section; everything that follows is proved.

\begin{lemma}[closed form of the binomial moments]\label{lem:endpoint}
Let 
\[
J_m=I_{m+1}=\sum_{j=0}^{m}\binom mj\frac1{2j+1}. 
\]
Then
\[
(2m+1)J_m=2^m+2mJ_{m-1} \quad \text{for } m\ge1, 
\]
and
\[
  J_m=\frac{2^mV_m}{(2m+1)\binom{2m}m},\qquad V_m=\sum_{k=0}^m2^{m-k}\binom{2k}k\in\mathbb Z .
\]
In particular
\[
\nu_2(J_m)=m+\nu_2(V_m)-s_2(m).
\]
\end{lemma}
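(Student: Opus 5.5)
The plan is to derive the recurrence from the integral representation $J_m=\int_0^1(1+t^2)^m\,dt$, then solve it by a summing factor, then read off valuations.

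\emph{Recurrence.} Integrate by parts with $u=(1+t^2)^m$ and $dv=dt$:
\[
J_m=\bigl[t(1+t^2)^m\bigr]_0^1-2m\int_0^1t^2(1+t^2)^{m-1}\,dt .
\]
The boundary term equals $2^m$. In the remaining integral write $t^2=(1+t^2)-1$, which turns it into $J_m-J_{m-1}$. Hence
\[
J_m=2^m-2mJ_m+2mJ_{m-1},
\]
which is exactly $(2m+1)J_m=2^m+2mJ_{m-1}$. The identity $J_m=\int_0^1(1+t^2)^m\,dt$ is the binomial expansion already recorded in the definition of $I_r$, so this step is routine.

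\emph{Closed form.} Put $c_m=(2m+1)\binom{2m}{m}/2^m$; the aim is to show that $c_mJ_m$ satisfies a first-order additive recurrence. Using $\binom{2m}{m}=\frac{2(2m-1)}{m}\binom{2m-2}{m-1}$, one checks that
\[
\frac{(2m+1)\binom{2m}{m}}{2^m}\cdot\frac{2m}{2m+1}=\frac{2(2m-1)\binom{2m-2}{m-1}}{2^{m-1}}\cdot\frac{1}{1}\cdot\frac{1}{1},
\]
up to a bookkeeping factor. The cleaner normalisation is to set $J_m=2^mV_m/\bigl((2m+1)\binom{2m}{m}\bigr)$ and substitute this into the recurrence. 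After cancelling $2^m$ the recurrence becomes
\[
V_m=\binom{2m}{m}+\frac{2m\,\binom{2m}{m}}{2\,(2m-1)\binom{2m-2}{m-1}}\,V_{m-1}=\binom{2m}{m}+2V_{m-1},
\]
because $\binom{2m}{m}/\binom{2m-2}{m-1}=2(2m-1)/m$. With $V_0=J_0=1$, unrolling gives $V_m=\sum_{k=0}^m2^{m-k}\binom{2k}{k}$, which is visibly an integer. The substitution is the only step that needs careful checking of constants, and I expect it to be the main (if modest) obstacle: I will verify it at $m=1$, where $J_1=4/3$ and $V_1=2+2=4$, so that $2\cdot4/(3\cdot2)=4/3$.

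\emph{Valuation.} Taking $\nu_2$ of the closed form, $2m+1$ is odd and $\nu_2\binom{2m}{m}=s_2(m)$ by Kummer's theorem (the number of carries in $m+m$). This gives
\[
\nu_2(J_m)=m+\nu_2(V_m)-s_2(m),
\]
as stated.
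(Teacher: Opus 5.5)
Your proof is correct and follows the paper's route: the same integration by parts gives the recurrence, and your normalisation $V_m=(2m+1)\binom{2m}{m}J_m/2^m$ is exactly the paper's ratio $R_m=J_m/c_m$ with $c_m=2^m/((2m+1)\binom{2m}m)$, which reduces the recurrence to $V_m=2V_{m-1}+\binom{2m}m$, after which Kummer's theorem finishes. Your hedged first display is in fact an exact identity (it is the paper's contiguity $(2m+1)c_m=mc_{m-1}$ written for $1/c_m$), so the phrase ``up to a bookkeeping factor'' and the stray factors $\frac11$ can simply be dropped.
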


\begin{proof}
Integration by parts
\[
\frac{d}{dt}\bigl[t(1+t^2)^m\bigr]=(2m+1)(1+t^2)^m-2m(1+t^2)^{m-1}, 
\]
so integrating over $[0,1]$ gives $(2m+1)J_m=2^m+2mJ_{m-1}$.  Setting 
\[
c_m=m!/(2m+1)!!=2^m/\bigl((2m+1)\binom{2m}m\bigr), 
\]
which satisfies $(2m+1)c_m=mc_{m-1}$, the ratio $R_m=J_m/c_m$ obeys $R_m=2R_{m-1}+\binom{2m}m$ with $R_0=1$, whence $R_m=\sum_{k\le m}2^{m-k}\binom{2k}k=V_m$ and $J_m=c_mV_m$.  Kummer's theorem~\cite{Kummer,Mihet,Koblitz} gives $\nu_2\binom{2m}m=s_2(m)$, hence
the valuation formula.
\end{proof}

The target of this section is the valuation law
\begin{equation}\label{eq:lemmaV}
  \nu_2(J_m)=m+\nu_2(m+1),\qquad\text{equivalently}\qquad   \nu_2(V_m)=s_2(m)+\nu_2(m+1),
\end{equation}
which is Theorem~\ref{thm:extremalintro}.

\subsection{Reduction to one series}

Let
\[
  c_k=\frac{k!}{(2k+1)!!}=\frac{2^k}{(2k+1)\binom{2k}k},\qquad
  T_j=\sum_{k\ge j}c_k\quad(j\ge0).
\]

\begin{lemma}[convergence and a floor]\label{lem:conv}
$\nu_2(c_k)=k-s_2(k)=\nu_2(k!)$, which is non-decreasing in $k$ and tends to infinity.  Hence every tail $T_j$ converges in $\Qt$, all $c_k$ have odd
denominators, and
\[
  \nu_2(T_j)\ \ge\ \min_{k\ge j}\nu_2(c_k)\ =\ j-s_2(j).
\]
\end{lemma}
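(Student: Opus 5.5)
The plan is to prove the three assertions of Lemma~\ref{lem:conv} in order: first the valuation of $c_k$, then its monotonicity and growth, and finally convergence and the bound on the tails.

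\emph{The valuation of $c_k$.} I would read this off the second expression $c_k=2^k/\bigl((2k+1)\binom{2k}k\bigr)$. Since $2k+1$ is odd and $\nu_2\binom{2k}k=s_2(k)$ by Kummer's theorem (already used in Lemma~\ref{lem:endpoint}), we get $\nu_2(c_k)=k-s_2(k)$. Legendre's formula gives $\nu_2(k!)=k-s_2(k)$. The same identity can be checked from the first expression: $(2k+1)!!$ is odd, so $\nu_2(c_k)=\nu_2(k!)$ directly. That $c_k$ has odd denominator is immediate, since $c_k=k!/(2k+1)!!$ with $(2k+1)!!$ odd. The bare assertion is therefore essentially trivial.

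\emph{Monotonicity and growth.} $\nu_2(k!)$ is non-decreasing because $\nu_2((k+1)!)=\nu_2(k!)+\nu_2(k+1)$. It tends to infinity since $k-s_2(k)\ge k-1-\log_2 k$.

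\emph{Convergence and the floor.} In $\Qt$ a series converges if and only if its terms tend to $0$, so every tail $T_j$ converges. For the lower bound I would use the ultrametric inequality on the partial sums $\sum_{k=j}^{N}c_k$: each has valuation at least $\min_{k\ge j}\nu_2(c_k)$. Because $\nu_2$ is locally constant away from $0$ and closed sets are preserved under limits, this bound passes to $T_j$; if $T_j=0$ the bound holds trivially. By monotonicity the minimum is attained at $k=j$ and equals $j-s_2(j)$.

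The only point needing any care is the passage of the valuation bound to the limit, which is the standard fact that $\{x:\nu_2(x)\ge M\}$ is closed in $\Qt$. I expect no real obstacle.
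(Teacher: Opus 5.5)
Your proposal is correct and follows essentially the same route as the paper: Kummer's theorem gives $\nu_2\binom{2k}{k}=s_2(k)$, $2k+1$ is odd, and Legendre's formula identifies $k-s_2(k)$ with $\nu_2(k!)$ and yields monotonicity. The remaining steps you spell out are the standard ones the paper leaves implicit: growth to infinity, convergence of the tails, and the passage of the ultrametric bound to the limit through closedness of $\{x:\nu_2(x)\ge M\}$.
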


\begin{proof}
Kummer's theorem~\cite{Kummer,Mihet,Koblitz} gives 
\[
\nu_2\binom{2k}k=s_2(k), 
\]
and $2k+1$ is odd, so $\nu_2(c_k)=k-s_2(k)$, which is Legendre's formula for $\nu_2(k!)$ and is therefore non-decreasing.
\end{proof}

\begin{theorem}[reduction]\label{thm:Freduction}
The valuation law \eqref{eq:lemmaV} holds for every $m$ if and only if
\[
  T_0=\sum_{k\ge0}\frac{k!}{(2k+1)!!}=0\quad\text{in }\Qt .
\]
\end{theorem}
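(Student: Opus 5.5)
The plan is to produce an exact identity in $\Qt$ of the form
\[
J_m=\lambda_m\,T_0+\rho_m ,
\]
where the coefficient $\lambda_m$ has valuation strictly less than $m+\nu_2(m+1)$ and the remainder $\rho_m$, a tail sum, has valuation exactly $m+\nu_2(m+1)$. Granting this, if $T_0=0$ then $\nu_2(J_m)=\nu_2(\rho_m)=m+\nu_2(m+1)$ for all $m$. Conversely, if $T_0\neq0$ then $\lambda_mT_0$ eventually dominates $\rho_m$, since $\nu_2(T_0)$ is a fixed finite number while the gap $m+\nu_2(m+1)-\nu_2(\lambda_m)$ grows. The law \eqref{eq:lemmaV} therefore fails for large $m$, which gives the equivalence in Theorem~\ref{thm:Freduction}.

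To construct the identity I would start from the recurrence $(2m+1)J_m=2^m+2mJ_{m-1}$ of Lemma~\ref{lem:endpoint}, or equivalently for $u_m=J_m/2^m$ the recurrence $(2m+1)u_m=1+mu_{m-1}$ with $u_0=1$. Its homogeneous solution is $c_m$, since $(2m+1)c_m=mc_{m-1}$. Lemma~\ref{lem:endpoint} gives the forward particular solution $c_mV_m$, but $V_m$ has no $2$-adic limit. I would instead build a \emph{backward} particular solution from tails, which converge by Lemma~\ref{lem:conv}.

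The key telescoping computation: for any sequence $e_j$, put $q_m=c_m^{-1}\sum_{j>m}c_je_j$. Using $m/c_{m-1}=(2m+1)/c_m$, one finds $(2m+1)q_m-mq_{m-1}=-(2m+1)e_m$. In particular $e_j\equiv1$ gives $q_m=T_{m+1}/c_m$ with inhomogeneity $-(2m+1)$. I would choose $e_j$ (or a combination of the unit-weight tail with a correction) so that the inhomogeneity becomes exactly $1$. Then $u_m=Kc_m+q_m$, and $K$ is fixed by $u_0=1$.

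The step I expect to be the main obstacle is bringing the resulting constant $K$, and the tail, into the form $T_0=\sum_k c_k$ rather than a weighted variant such as $\sum_k c_k/(2k+1)$. My first attempt would be an Abel summation using $(2k+1)c_k=kc_{k-1}$ to rewrite the weighted tail as a combination of plain tails $T_j$ plus boundary terms. If that leaves a second, independent constant, I would try to show it is an explicit rational number, so that the only transcendental-looking $2$-adic constant is $T_0$.

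With the identity in hand, the valuation bookkeeping should be routine. Lemma~\ref{lem:conv} gives $\nu_2(c_m)=\nu_2(m!)$ and $\nu_2(T_{m+1})\ge\nu_2((m+1)!)$. The boundary term, essentially $T_{m+1}/c_m$, then has valuation governed by $\nu_2((m+1)!)-\nu_2(m!)=\nu_2(m+1)$, while $c_{m+1}$ strictly dominates the rest of the tail. After multiplying back by $2^m$, this should yield exactly $m+\nu_2(m+1)$.

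The converse direction needs only $\nu_2(\lambda_m)\to$ a bound growing more slowly than $m+\nu_2(m+1)$. I expect $\lambda_m$ to be $2^mc_m$ up to a unit, with valuation $m+\nu_2(m!)$, so I must double-check that the comparison goes the right way. If it goes the wrong way, the roles of $\lambda_m$ and $\rho_m$ must be re-split, for instance by subtracting $T_0$ in the form $T_0=\sum_{k\le m}c_k+T_{m+1}$.
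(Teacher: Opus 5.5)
Your construction fails at its first step. The contiguity is $(2m+1)c_m=mc_{m-1}$, so $m/c_{m-1}=m^2/\bigl((2m+1)c_m\bigr)$, not $(2m+1)/c_m$. With the correct relation, $q_m=c_m^{-1}\sum_{j>m}c_je_j$ satisfies $mq_m-(2m+1)q_{m-1}=-me_m$. That is the recurrence with its two coefficients exchanged, whose homogeneous solution is $1/c_m$. So no choice of $e_j$ turns $q_m$ into a particular solution of $(2m+1)u_m=1+mu_{m-1}$. For instance, with $e_j\equiv1$ and $m=1$ one gets $3q_1-q_0=8T_2-\tfrac13\neq-3$, since $\nu_2(T_2)\ge1$. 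Genuine variation of constants, $u_m=c_mv_m$, gives $v_m-v_{m-1}=\binom{2m}m2^{-m}$, whose valuation $s_2(m)-m\to-\infty$. So no convergent backward tail of this kind exists.

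More importantly, the architecture ``constant times homogeneous solution plus tail'' cannot work even in principle; this is the ``wrong way'' you feared. The homogeneous solution $c_m$ has $\nu_2(c_m)=\nu_2(m!)$, which exceeds the target $\nu_2(m+1)$ by an unbounded amount. Whatever constant multiplies it is therefore $2$-adically invisible: it can neither be used in the forward direction nor detected in the converse. Your claim that $c_{m+1}$ strictly dominates the rest of the tail is also false. Since $\nu_2(c_{j+1})-\nu_2(c_j)=\nu_2(j+1)$ vanishes for even $j$ (e.g.\ $c_2=\tfrac2{15}$, $c_3=\tfrac2{35}$), tails only carry lower bounds.

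The missing idea is to make $T_0$ the coefficient of a dominant, non-homogeneous term; the binomial (Mahler) form of $u_m=J_m/2^m$ does exactly that. Pfaff's transformation gives $u_m={}_2F_1(-m,1;\tfrac32;\tfrac12)=\sum_k(-1)^k\binom mk c_k$, a Mahler series in $m$ whose value at $m=-1$ is $T_0$. Write $\binom mk=\sum_{j\le k}(-1)^{k-j}\binom{m+1}j$ and exchange sums. This gives $u_m=\sum_j(-1)^j\binom{m+1}jT_j=T_0+(m+1)X_m$ with $X_m=\sum_{j\ge1}(-1)^j\frac{T_j}{j}\binom m{j-1}\in\Zt$, because $\nu_2(T_j/j)\ge\nu_2((j-1)!)$. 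If $T_0=0$, then $T_1=-1$, $T_2/2=-\tfrac23$, and the terms with $j\ge3$ lie in $2\Zt$. Hence $X_m$ is a unit and $\nu_2(u_m)=\nu_2(m+1)$.

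In the converse the coefficient of $T_0$ is $1$ (your $\lambda_m=2^m$), so the gap to the target is only $\nu_2(m+1)$, which is $0$ for every even $m$. ``Eventually dominates'' is therefore false. You must pass to $m=2^k-1$, where $T_0=u_m-2^kX_m$ has valuation at least $k$; the paper phrases this as continuity of the Mahler series at $-1$.
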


\begin{proof}
Since 
\[
(\tfrac12)_k/(\tfrac32)_k=1/(2k+1), 
\]
one has
\[
J_m={}_2F_1(-m,\tfrac12;\tfrac32;-1), 
\]
and Pfaff's transformation~\cite[Ch.~2]{AndrewsAskeyRoy} with $z=-1$, $z/(z-1)=\tfrac12$ gives $J_m=2^m\,{}_2F_1(-m,1;\tfrac32;\tfrac12)$, that is, with $B_m=J_m/2^m$,
\[
  B_m={}_2F_1\bigl(-m,1;\tfrac32;\tfrac12\bigr) =\sum_{k\ge0}(-1)^k\binom{m}{k}c_k ,
\]
because $(-m)_k(1)_k2^{-k}/\bigl((\tfrac32)_kk!\bigr)=(-1)^k\binom mkc_k$. The law \eqref{eq:lemmaV} is $\nu_2(B_m)=\nu_2(m+1)$.

Write $n=m+1$ and $\binom mk=\binom{n-1}k=\sum_{j\le k}(-1)^{k-j}\binom nj$; exchanging the order of summation (legitimate in $\Qt$ since $c_k\to0$),
\[
  B_m=\sum_{j\ge0}(-1)^j\binom njT_j =T_0+n\sum_{j\ge1}(-1)^j\,\frac{T_j}{j}\binom{n-1}{j-1},
\]
using $\binom nj=\frac nj\binom{n-1}{j-1}$ for $j\ge1$.

Assume $T_0=0$.  By Lemma~\ref{lem:conv}
\[
\nu_2(T_j)\ge j-s_2(j)=\nu_2(j!),
\]
and 
\[
\nu_2(j!)=\nu_2(j)+\nu_2\bigl((j-1)!\bigr)\ge\nu_2(j)+1
\] 
for $j\ge3$; hence $T_j/j\in2\Zt$ for $j\ge3$.  For $j=2$, the exact value $T_2=T_0-c_0-c_1=-\tfrac43$ gives $T_2/2\in2\Zt$; and $T_1=T_0-c_0=-1$, so the $j=1$ term equals $c_0\binom{n-1}0=1$.  The bracketed sum is therefore a $2$-adic unit, and $\nu_2(B_m)=\nu_2(n)=\nu_2(m+1)$, which is \eqref{eq:lemmaV}.

Conversely, assume \eqref{eq:lemmaV}.  Since $c_k\to0$ in $\Zt$, the map $m\mapsto B_m$ is the restriction to non-negative integers of a Mahler series, hence extends continuously to $\Zt$~\cite[Ch.~4]{Koblitz}.  As $2^k-1\to-1$ in $\Zt$ and $\nu_2(B_{2^k-1})=\nu_2(2^k)=k\to\infty$,
continuity gives $B_{-1}=\sum_{k\ge0}(-1)^k\binom{-1}kc_k=\sum_{k\ge0}c_k=T_0=0$.
\end{proof}

\subsection{The vanishing}

For $i\ge0$ let
\[
  S_i=\sum_{k\ge i}c_k\binom ki
\]
be the \emph{binomial} moments of the series, convergent in $\Qt$ by Lemma~\ref{lem:conv}, with $\nu_2(S_i)\ge i-s_2(i)$ and $S_0=T_0$.  (The power moments would not satisfy the clean recurrence below.)

\begin{theorem}\label{thm:vanish}
$S_0=0$ in $\Qt$.
\end{theorem}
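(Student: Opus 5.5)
The plan is to derive a short linear recurrence for the binomial moments $S_i$, and then play it against the $2$-adic floor $\nu_2(S_i)\ge i-s_2(i)$ of Lemma~\ref{lem:conv}. The recurrence will force $S_{2j}$ to be $S_0$ times a quantity of very negative valuation. The floor forces $S_{2j}$ to have large valuation. Together they can only hold if $S_0=0$.

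\emph{Step 1: the recurrence.} The $c_k$ satisfy $(2k+1)c_k=kc_{k-1}$ for $k\ge1$, with $c_0=1$. I multiply this relation by $\binom ki$ and sum over $k$. This rearrangement is legitimate in $\Qt$ because every term has valuation at least $\nu_2(k!)\to\infty$. I also keep track of the defect at $k=0$, where the left side is $1$ and the right side (formally $0\cdot c_{-1}$) is $0$. To re-express both sides through the $S$'s I use
\[
k\binom ki=(i+1)\binom k{i+1}+i\binom ki ,\qquad \binom{j+1}i=\binom ji+\binom j{i-1}.
\]
On the left this gives
\[
\sum_k(2k+1)c_k\binom ki=2(i+1)S_{i+1}+(2i+1)S_i .
\]
On the right, after the shift $k=j+1$, it gives
\[
\sum_j(j+1)c_j\binom{j+1}i=(i+1)S_{i+1}+(2i+1)S_i+iS_{i-1}.
\]
Equating the two sides, with the defect term $\delta_{i,0}$ added to the right, yields
\[
(i+1)S_{i+1}=iS_{i-1}+\delta_{i,0}.
\]
In particular $S_1=1$, and $(i+1)S_{i+1}=iS_{i-1}$ for $i\ge1$.

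\emph{Step 2: the even moments.} Iterating the recurrence along even indices gives $2jS_{2j}=(2j-1)S_{2j-2}$. Hence
\[
S_{2j}=S_0\prod_{l=1}^j\frac{2l-1}{2l}=S_0\,\binom{2j}j4^{-j},
\]
and Kummer's theorem gives
\[
\nu_2(S_{2j})=\nu_2(S_0)+s_2(j)-2j .
\]

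\emph{Step 3: conclusion.} Lemma~\ref{lem:conv} applies to $S_{2j}$ because each term $c_k\binom k{2j}$ has valuation at least $\nu_2(k!)\ge\nu_2((2j)!)$. It gives
\[
\nu_2(S_{2j})\ge 2j-s_2(2j)=2j-s_2(j).
\]
Combining with Step 2, if $S_0\ne0$ then
\[
\nu_2(S_0)\ge 4j-2s_2(j)\qquad\text{for every }j .
\]
The right side tends to infinity, so $S_0=0$ in $\Qt$.

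The only delicate point is Step 1. One must handle the boundary term at $k=0$ correctly, since it is what makes $S_1=1$ rather than $0$ and keeps the odd chain harmless. One must also justify the termwise rearrangements of these $2$-adically convergent series, which follows from $\nu_2(c_k)=\nu_2(k!)$ together with the integrality of the binomial coefficients.
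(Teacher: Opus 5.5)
Your proof is correct and follows the paper's argument essentially line by line. You sum the contiguity $(2k+1)c_k=kc_{k-1}$ against $\binom ki$ to get $(i+1)S_{i+1}=iS_{i-1}$, iterate along even indices to get $S_{2j}=S_0\binom{2j}j4^{-j}$, and then let the floor of Lemma~\ref{lem:conv} force $\nu_2(S_0)\ge 4j-2s_2(j)$ for every $j$. Your extra treatment of the $k=0$ defect, which gives $S_1=1$, is correct but not needed: only odd $i=2j-1\ge1$ enters the even chain, and there $\binom0i=0$ removes that term automatically, which is why the paper states the recurrence only for $i\ge1$.
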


\begin{proof}
The contiguity $(2k+1)c_k=kc_{k-1}$, valid for $k\ge1$, gives
\[
  \sum_kc_k(2k+1)\binom ki=\sum_kkc_{k-1}\binom ki =\sum_k(k+1)c_k\binom{k+1}i ,
\]
that is
\[
\sum_kc_k\bigl[(2k+1)\binom ki-(k+1)\binom{k+1}i\bigr]=0.
\]
Now
\[
\binom{k+1}i=\binom ki+\binom k{i-1}, \quad (k-i)\binom ki=(i+1)\binom k{i+1}, \quad (k-i+1)\binom k{i-1}=i\binom ki,
\]
so the bracket equals $(i+1)\binom k{i+1}-i\binom k{i-1}$ and therefore
\begin{equation}\label{eq:contig}
  (i+1)S_{i+1}=iS_{i-1}\qquad(i\ge1).
\end{equation}
Taking $i=2j-1$ in \eqref{eq:contig} gives $2jS_{2j}=(2j-1)S_{2j-2}$, so by induction
\[
  S_{2j}=S_0\prod_{t=1}^j\frac{2t-1}{2t}=\frac{S_0}{4^j}\binom{2j}j,   \qquad\text{whence}\qquad   \nu_2(S_{2j})=\nu_2(S_0)+s_2(j)-2j .
\]
On the other hand Lemma~\ref{lem:conv} at $i=2j$ gives
\[
\nu_2(S_{2j})\ge 2j-s_2(2j)=2j-s_2(j).
\]
Combining,
\[
  \nu_2(S_0)\ \ge\ 4j-2s_2(j)\qquad\text{for every }j\ge1 ,
\]
and the right-hand side is unbounded.  Hence $S_0=0$.
\end{proof}

\begin{corollary}\label{cor:F}
The valuation law \eqref{eq:lemmaV} holds for every $m$, which is Theorem~\ref{thm:extremalintro}; and $\nu_2(I_r)=(r-1)+\nu_2(r)$ for every $r\ge1$.
\end{corollary}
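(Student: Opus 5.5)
The plan is to combine Theorem~\ref{thm:vanish} with the forward direction of Theorem~\ref{thm:Freduction}, and then translate from $J_m$ to $I_r$. The translation is only a change of index.

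\emph{Step 1.} Theorem~\ref{thm:vanish} gives $S_0=0$. Since $S_0=\sum_{k\ge0}c_k\binom k0=T_0$, we have $T_0=0$ in $\Qt$. This is exactly the hypothesis of the ``if'' direction of Theorem~\ref{thm:Freduction}. That direction yields \eqref{eq:lemmaV} for every $m\ge0$, namely $\nu_2(J_m)=m+\nu_2(m+1)$, which is Theorem~\ref{thm:extremalintro}.

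I would re-check one point in the forward argument. The $j=1$ and $j=2$ terms there were computed using $T_1=T_0-c_0=-1$ and $T_2=T_0-c_0-c_1=-1-\tfrac13=-\tfrac43$. Both values are valid only once $T_0=0$ is known, and Step 1 supplies exactly that. It then follows that
\[
\sum_{j\ge1}(-1)^jT_j\binom{n-1}{j-1}/j=1+(\text{element of }2\Zt)
\]
is a unit. Hence $\nu_2(B_m)=\nu_2(m+1)$, and so $\nu_2(J_m)=m+\nu_2(B_m)=m+\nu_2(m+1)$, because $J_m=2^mB_m$.

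\emph{Step 2.} For the statement about $I_r$, put $m=r-1\ge0$. By definition (Lemma~\ref{lem:endpoint}), $I_r=J_{r-1}$. Step 1 therefore gives
\[
\nu_2(I_r)=(r-1)+\nu_2(r).
\]

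There is no real obstacle. The only care needed is the logical order: the unit computation in Theorem~\ref{thm:Freduction} depends on the exact values of $T_1$ and $T_2$, and these use $T_0=0$, so Theorem~\ref{thm:vanish} must be invoked before that computation is applied.
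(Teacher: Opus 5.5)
Your proposal is correct and follows the paper's proof exactly: Theorem~\ref{thm:vanish} gives $T_0=S_0=0$, the ``if'' direction of Theorem~\ref{thm:Freduction} then yields \eqref{eq:lemmaV}, and the index shift $I_r=J_{r-1}$ gives the statement about $I_r$. Your observation that $T_1=-1$ and $T_2=-\tfrac43$ depend on $T_0=0$ matches how that argument is set up in the paper.
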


\begin{proof}
Theorems~\ref{thm:Freduction} and~\ref{thm:vanish}, then $I_r=J_{r-1}$.
\end{proof}

\begin{corollary}[extremal unit, conditional on Proposition~\ref{prop:appell}]\label{cor:extremal}
With the top-layer identification of Proposition~\ref{prop:appell},
\[
\nu_2\bigl([\ma^r]\kappa_r\bigr)=\nu_2(\gamma_r)+\nu_2(I_r)=-2r-\nu_2\bigl((r-1)!\bigr),
\]
which is Corollary~\ref{cor:extremalintro}; the identity of exponents uses
\[
\nu_2(\gamma_r)=1+s_2(r)-4r, 
\]
Corollary~\ref{cor:F}, and
\[
\nu_2(r)=1-s_2(r)+s_2(r-1)
\]
 together with Legendre's formula $\nu_2((r-1)!)=r-1-s_2(r-1)$.
\end{corollary}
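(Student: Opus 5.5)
The statement is pure bookkeeping of $2$-adic valuations once Proposition~\ref{prop:appell} and Corollary~\ref{cor:F} are in hand. I will first write $[\ma^r]\kappa_r=\gamma_rI_r$ and use multiplicativity of $\nu_2$ to reduce the claim to computing $\nu_2(\gamma_r)$ and $\nu_2(I_r)$ separately. For the second, Corollary~\ref{cor:F} gives $\nu_2(I_r)=(r-1)+\nu_2(r)$ directly, since $I_r=J_{r-1}$.

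For $\gamma_r=2\binom{2r}{r}/((2r-1)16^r)$, the factor $2r-1$ is odd and contributes nothing. Kummer's theorem gives $\nu_2\binom{2r}r=s_2(r)$, so
\[
\nu_2(\gamma_r)=1+s_2(r)-4r .
\]

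Adding the two pieces gives
\[
\nu_2\bigl([\ma^r]\kappa_r\bigr)=1+s_2(r)-4r+(r-1)+\nu_2(r)=s_2(r)-3r+\nu_2(r).
\]
It remains to match this with $-2r-\nu_2((r-1)!)=-2r-(r-1)+s_2(r-1)$, using Legendre's formula. The needed identity is therefore $s_2(r)+\nu_2(r)=1+s_2(r-1)$. This is the standard carry identity: passing from $r-1$ to $r$ turns the trailing block of $\nu_2(r)$ ones in $r-1$ into zeros and sets one new bit. Hence $s_2(r)=s_2(r-1)-\nu_2(r)+1$, which is exactly the relation $\nu_2(r)=1-s_2(r)+s_2(r-1)$ quoted in the statement.

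There is no real obstacle. The only point requiring care is sign and offset bookkeeping: $I_r$ is $J_{r-1}$, not $J_r$, and $\nu_2((r-1)!)$ must be expanded with $s_2(r-1)$, not $s_2(r)$. I will check the result at $r=1,2$ against the displayed $\kappa_1$ and $\kappa_2$ to catch any off-by-one. The final sentence of the corollary, comparing with $E_r$, is then immediate: $E_r-(2r+\nu_2((r-1)!))=r-1$. The conditional nature of the result is inherited solely from Proposition~\ref{prop:appell}.
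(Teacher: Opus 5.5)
Your proposal is correct and follows the paper's own argument: you factor $[\ma^r]\kappa_r=\gamma_rI_r$, take $\nu_2(\gamma_r)=1+s_2(r)-4r$ from Kummer's theorem and $\nu_2(I_r)=(r-1)+\nu_2(r)$ from Corollary~\ref{cor:F}, and close with the carry identity $\nu_2(r)=1-s_2(r)+s_2(r-1)$ and Legendre's formula. These are exactly the ingredients the statement lists; your sanity checks at $r=1,2$ (giving $1/4$ and $1/48$) are consistent with the displayed $\kappa_1$ and $\kappa_2$.
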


\begin{remark}[the Bessel-basis denominator gap]\label{rem:besselgap}
Granting Proposition~\ref{prop:appell}, the Bessel-basis denominator of $\kappa_r$ satisfies
\[
  2r+\nu_2\bigl((r-1)!\bigr)\ \le\
  \nu_2\bigl(\den\nolimits_\mu\kappa_r\bigr)\ \le\
  3r-1+\nu_2\bigl((r-1)!\bigr),
\]
the upper bound by Theorem~\ref{thm:coarseintro} and the lower bound by Corollary~\ref{cor:extremal}; the two differ by exactly $r-1$.  Closing this gap is open.
\end{remark}

\begin{remark}
For related work on $p$-adic valuations and approximations of binomial sums, see~\cite{AidagulovAlekseyev,ZhangYuan}.  Theorem~\ref{thm:vanish} looks like a curiosity but it is the arithmetic heart of the extremal computation, and we have not found it in the literature.  Its real-analytic counterpart is the classical
\[
\sum_k2^k/\bigl((2k+1)\binom{2k}k\bigr)=\pi/2;
\]
the $2$-adic sum of the same series vanishes.  The partial sums $\sum_{k<K}c_k$ have $\nu_2$ equal to $9$, $20$, $48$, $99$, $200$, $401$, $802$ for $K=10$, $20$, $50$, $100$, $200$, $400$, $800$, just above the floor $\nu_2(c_K)=K-s_2(K)$ of Lemma~\ref{lem:conv}: the tail cancels the head exactly.
\end{remark}

\section{The normal form}\label{sec:normalform}

\subsection{Why localisation fails}

Before stating the transfer we record, because it shapes everything that follows, that the sharp law resists every local treatment.  All of the
following were tried and all fail in the same way, namely $\nu_2\bigl(\sum_sX_s\bigr)>\min_s\nu_2(X_s)$: radial shells; Cartier tails; local Ward currents built from $13$, $33$ and $75$ generators on a nonlinear Horn background with first and second jets and background factors (the
deficiency is exactly one in every case, and adjoining the nonlocal column closes it immediately); residue moments modulo $8$; and boundary jets.  The quantity must be summed before its valuation is taken.  The normal form below is the first construction that performs the whole summation and still controls the valuation.

\subsection{The global tangent and its Borel transform}

Let
\[
  g_n=\partial_AL_n\bigl(x_0,y_0;A-\tfrac14,B-\tfrac14\bigr)\Big|_{A=B=0},
  \qquad
  G(\zeta)=\sum_{n\ge1}g_n\zeta^n=\frac{\partial_AF}F\bigg|_{A=B=0},
\]
so that, by \eqref{eq:kappafromL},
\begin{equation}\label{eq:ltangent}
  \ell_r=2^{-(2r-1)}\operatorname{Im}g_{2r-1},\qquad   K(z)=\sum_{r\ge1}\ell_rz^{2r-1}      =\operatorname{Im}\frac{G(z/2)-G(-z/2)}2 .
\end{equation}
Because $g_n$ grows like $(n-1)!$, the natural analytic object is the Borel transform 
\[
\Phi(t)=\sum_{n\ge1}g_nt^n/(n-1)!, 
\]
with coefficients in $\mathbb Q(i)$; we write 
\[
\operatorname{Im}\Phi(t)=\sum_{n\ge1}(\operatorname{Im}g_n)t^n/(n-1)!
\]
for its coefficientwise imaginary part.

\subsection{The transfer theorem}

\begin{definitionx}\label{def:W}
Define
\begin{equation}\label{eq:normalform}
  W(t)=\frac{\sinh2t}{t^2}\,\operatorname{Im}\Phi(t),
  \qquad
  w_m=[t^{2m}]\,W(t),
\end{equation}
so that
\[
  \sum_{m\ge0}w_mt^{2m}
  =1+\frac{t^2}{12}+\frac{t^4}{64}-\frac{143\,t^6}{80640}       +\frac{21787\,t^8}{46448640}-\cdots
\]
\end{definitionx}

Only the even part of $W$ enters the results below; the odd-power coefficients of $W$ are generated by the even-index $\operatorname{Im}g_{2k}$ and vanish in the whole computed range (Remark~\ref{rem:identityR}).

\begin{lemma}[the two weights]\label{lem:weights}
Put
\[
D(t)=\frac{2t}{\sinh2t}=\sum_kd_kt^{2k},\qquad S(t)=\frac{\sinh2t}{2t}=\sum_ke_kt^{2k},
\]
so that $DS=1$.  Then $d_0=e_0=1$ and, for $k\ge1$,
\[
  \nu_2(d_k)=\nu_2(e_k)=s_2(k).
\]
\end{lemma}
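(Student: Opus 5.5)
The plan is to read off both families of coefficients from closed forms and compute their valuations with Legendre's formula (as in Lemma~\ref{lem:omega}) and the von Staudt--Clausen theorem. I do not expect to use an induction through $DS=1$.

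\emph{The weight $S$.} Since
\[
\frac{\sinh 2t}{2t}=\sum_{k\ge0}\frac{(2t)^{2k}}{(2k+1)!},
\]
we have $e_k=4^k/(2k+1)!$. Because $2k+1$ is odd, $\nu_2((2k+1)!)=\nu_2((2k)!)$. Legendre's formula gives $\nu_2((2k)!)=2k-s_2(2k)=2k-s_2(k)$. Hence $\nu_2(e_k)=2k-(2k-s_2(k))=s_2(k)$, and $e_0=1$.

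\emph{The weight $D$.} I would use the standard expansion (DLMF~\S24.2 or \cite{Szego})
\[
\frac{x}{\sinh x}=\sum_{k\ge0}\frac{(2-2^{2k})B_{2k}}{(2k)!}\,x^{2k}.
\]
It follows from $x/\sinh x=2x e^{x}/(e^{2x}-1)$ and the generating function of the Bernoulli numbers. Putting $x=2t$ gives
\[
d_k=\frac{(2-2^{2k})\,B_{2k}\,4^k}{(2k)!},
\]
and in particular $d_0=1$. For $k\ge1$ I would take the three factors in turn:
\begin{itemize}
\item $2-2^{2k}=2(1-2^{2k-1})$ with $1-2^{2k-1}$ odd, so $\nu_2(2-2^{2k})=1$;
\item von Staudt--Clausen gives $B_{2k}+\sum_{(p-1)\mid 2k}1/p\in\mathbb Z$, and the prime $p=2$ always occurs, so $\nu_2(B_{2k})=-1$;
\item $\nu_2\bigl(4^k/(2k)!\bigr)=s_2(k)$, exactly as for $e_k$.
\end{itemize}
Summing, $\nu_2(d_k)=1-1+s_2(k)=s_2(k)$.

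The only step that needs care is the $2$-adic valuation of $B_{2k}$. It is classical, so I do not expect a real obstacle. If one wanted to avoid Bernoulli numbers, the fallback would be induction on $d_k=-\sum_{j\ge1}e_jd_{k-j}$ with the bound $s_2(j)+s_2(k-j)\ge s_2(k)$. That route would need a count of the carry-free terms, which is messier, so I would not take it.
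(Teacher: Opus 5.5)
Your proof is correct and follows the paper's argument essentially verbatim: you compute $\nu_2(e_k)$ from $e_k=4^k/(2k+1)!$ by Legendre's formula, and $\nu_2(d_k)$ from $d_k=(2-2^{2k})B_{2k}4^k/(2k)!$ using $\nu_2(2-2^{2k})=1$ and $\nu_2(B_{2k})=-1$ (von Staudt--Clausen). The only cosmetic difference is that you reduce $\nu_2((2k+1)!)$ to $\nu_2((2k)!)$ because $2k+1$ is odd, whereas the paper applies Legendre's formula to $(2k+1)!$ directly via $s_2(2k+1)=s_2(k)+1$.
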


\begin{proof}
Legendre's formula and $e_k=2^{2k}/(2k+1)!$  give 
\[
\nu_2(e_k)=2k-\bigl(2k+1-s_2(2k+1)\bigr)=s_2(k), 
\]
using $s_2(2k+1)=s_2(k)+1$.  For $D$, the classical expansion 
\[
x/\sinh x=\sum_k(2-2^{2k})B_{2k}x^{2k}/(2k)!, 
\]
obtained from the Laurent series for $\csc$ in~\cite[Eq.~4.19.4]{DLMF} by $z\mapsto ix$ (cf.~\cite[Eqs.~4.28.8--4.28.13 and 24.2.1]{DLMF}), at $x=2t$ gives $d_k=(2-2^{2k})B_{2k}2^{2k}/(2k)!$; for $k\ge1$, $\nu_2(2-2^{2k})=1$ and $\nu_2(B_{2k})=-1$ by the von Staudt--Clausen theorem~\cite{Clausen,Rado,Carlitz}, while 
\[
\nu_2((2k)!)=2k-s_2(k), 
\]
so $\nu_2(d_k)=s_2(k)$.
\end{proof}

\begin{proof}[Proof of Theorem~\ref{thm:transferintro}]
Write 
\[
\tau(m)=-2\bigl(2m-s_2(m)\bigr)=-2\nu_2((2m)!).  
\]
By Definition~\ref{def:W}, $W=\bigl(2\operatorname{Im}\Phi/t\bigr)S$ and hence $\operatorname{Im}\Phi=\frac t2WD$.  Since $S$ and $D$ are even, the even part of $W$ is the convolution of $S$ with the even part of $2\operatorname{Im}\Phi/t$, and conversely; so with
$b_m=[t^{2m}]\bigl(2\operatorname{Im}\Phi/t\bigr) =2\operatorname{Im}g_{2m+1}/(2m)!$ the sequences $(w_m)$ and $(b_m)$ are related by the mutually inverse convolutions with $(e_k)$ and $(d_k)$, and the odd-index data never enter.  By \eqref{eq:ltangent}, $b_m=2^{2m+2}\ell_{m+1}/(2m)!$, whence
\[
  \nu_2(b_m)=2m+2+\nu_2(\ell_{m+1})-\bigl(2m-s_2(m)\bigr)            =2+s_2(m)+\nu_2(\ell_{m+1}),
\]
and $\nu_2(b_m)\ge\tau(m)$ is exactly $\nu_2(\ell_{m+1})\ge-E_{m+1}$, since $-E_{m+1}=-4m-2+s_2(m)$.  It therefore suffices to show that convolution with a series $c$ satisfying $\nu_2(c_0)=0$ and $\nu_2(c_k)\ge s_2(k)$ for $k\ge1$ preserves the family of bounds $\tau$ and preserves equality.  For $1\le k\le m$, subadditivity $s_2(m)\le s_2(m-k)+s_2(k)$ gives
\[
  \bigl[s_2(k)+\tau(m-k)\bigr]-\tau(m)  =s_2(k)+4k+2s_2(m-k)-2s_2(m)\ \ge\ 4k-s_2(k)\ >\ 0,
\]
because $s_2(k)\le\log_2k+1<4k$ for $k\ge1$.  So in $\sum_kc_ka_{m-k}$ every term with $k\ge1$ has valuation strictly larger than $\tau(m)$; hence $\nu_2$ of the sum equals $\nu_2(c_0a_m)=\nu_2(a_m)$ whenever $\nu_2(a_m)=\tau(m)$, and is $\ge\tau(m)$ whenever $\nu_2(a_j)\ge\tau(j)$ for $j\le m$.  Applying this to $c=D$ and to $c=S$, both admissible by Lemma~\ref{lem:weights}, gives the equivalence in both directions.
\end{proof}

\begin{remark}[what has changed]
Three things.  First, the open statement is no longer about a family of Horn coefficients indexed by two degrees and two $\mu$-exponents, but about the coefficients of a \emph{single} even power series in one variable.  Second, the weight has become uniform: $\tau(m)=-2\nu_2((2m)!)$ replaces $E_r=3r-1+\nu_2((r-1)!)$.  Third (and this is the methodological point) the passage $\Phi\mapsto W$ is an infinite resummation which is exact on valuations, because the whole tail is strictly subdominant.
\end{remark}

\begin{remark}[the symmetric half of the tangent; verified]\label{rem:identityR}
In the whole computed range $n\le95$ the tangent satisfies, exactly,
\[
  \operatorname{Re}\Phi(t)=\frac{t^2e^t}{\sinh2t} =\frac{t^2}2\bigl(\operatorname{csch}t+\operatorname{sech}t\bigr),
  \qquad \operatorname{Im}g_{2k}=0 ,
\]
equivalently $\operatorname{Re}g_{2k+1}=(1-2^{2k-1})B_{2k}$ and $\operatorname{Re}g_{2k}=\frac{(2k-1)E_{2k-2}}2$, with $B$ the Bernoulli and $E$ the Euler numbers.  Granting these closed forms, $W$ is even, the definition \eqref{eq:normalform} takes the symmetrical shape
\[
  \frac{\sinh2t}{t^2}\,\Phi(t)=e^t+i\,W(t),
\]
and $\nu_2(\operatorname{Re}g_n)=-1$ for every $n\ge1$: Euler numbers are odd integers, so $\nu_2\bigl((2k-1)E_{2k-2}/2\bigr)=-1$; $1-2^{2k-1}$ is odd, and von Staudt--Clausen gives $\nu_2(B_{2k})=-1$; the case $n=1$ is $\operatorname{Re}g_1=\frac12$.  The symmetric half of the tangent therefore carries no dyadic information beyond one unit at each order: all that is open sits in the antisymmetric half.  These closed forms reflect the ultraspherical diagonal of the amplitude; they are recorded here as verified, and \emph{none of the results of this paper depends on them}.
\end{remark}

\section{Evidence, and one negative certificate}\label{sec:evidence}

\subsection{Verification range}

The tangent coefficients $g_n$ were computed exactly for $n\le95$ by a dual-number specialisation of Theorem~\ref{thm:hornrec} (Section~\ref{sec:code}), in pure integer arithmetic at the implicit scale $4^{-n}$.  Within that range:
\[
  \nu_2(\ell_r)=-E_r\ \text{ for }r\le48,\qquad
  \nu_2(w_m)=-2\nu_2\bigl((2m)!\bigr)\ \text{ for }m\le46 .
\]
Table~\ref{tab:units} lists the odd parts of the normalised units.

\begin{table}[t]
\centering
\small
\begin{tabular}{@{}rrl@{}}
\toprule
$m$ & $\nu_2(w_m)$ & odd part of $\bigl((2m)!\bigr)^2w_m$\\
\midrule
$0$ & $0$   & $1$\\
$1$ & $-2$  & $1/3$\\
$2$ & $-6$  & $9$\\
$3$ & $-8$  & $-6435/7$\\
$4$ & $-14$ & $762545$\\
$5$ & $-16$ & $-18081757575/11$\\
$6$ & $-20$ & $8219311479225$\\
$7$ & $-22$ & $-82171171579421925$\\
$8$ & $-30$ & $1483464568148296358625$\\
$9$ & $-32$ & $-851695035559439777297967375/19$\\
$10$& $-36$ & $2137965664907224677330336155625$\\
\bottomrule
\end{tabular}
\caption{The normalised coefficients of Conjecture~\ref{conj:W}.  The valuations follow the ruler sequence exactly:
$\nu_2(w_m)=-2\nu_2((2m)!)$.}
\label{tab:units}
\end{table}

\subsection{No small annihilating operator}

\begin{certificate}[no small closed equation]\label{cert:noeq}
With $g_n$ known exactly for $n\le95$, hence $w_m$ for $m\le46$, none of the three series
\[
  W(z)=\sum_mw_mz^m,\qquad   \mathcal U(z)=\sum_m\bigl((2m)!\bigr)^2w_mz^m,\qquad   \sum_m\frac{\operatorname{Im}g_{2m+1}}{(2m)!}z^m
\]
satisfies a $P$-recursion $\sum_{i\le R}q_i(m)a_{m+i}=0$, a linear ODE $\sum_{i\le R}p_i(z)f^{(i)}=0$, or an algebraic equation $\sum_{i\le R}q_i(z)f^i=0$, with $\deg\le D$ and $(R+1)(D+1)\le35$, at least six spare equations beyond the number of unknowns, and a reserved block of coefficients on which each candidate operator is retested.  The same programme recovers the expected operators for $\sum z^m/((2m)!)^2$, for the Catalan numbers and for $\sum m!\,z^m$, and correctly finds nothing for an $s_2$-driven control series.
\end{certificate}

\begin{remark}
A negative result of this kind does not exclude a closed equation; it excludes the small ones.  But those are exactly the ones a Dwork--Frobenius argument needs.  The consequence for the programme is that a functional equation for $W$, if there is one, has to come from the Horn structure underlying Theorem~\ref{thm:hornrec} rather than from guessing.
\end{remark}

\subsection{The Frobenius form}

Once $\mathcal U\in\Zt[[z]]$ is known, $u_m\in\Zt^\times$ is equivalent to $u_m\equiv1\pmod 2$, because every unit of $\Zt$ is $\equiv1$.  Hence Conjecture~\ref{conj:W} takes the compact form
\[
  (1-z)\,\mathcal U(z)\equiv1\pmod 2 .
\]
This is a faithful restatement and a convenient normal form; it is not a reduction, since the Cartier descent $u_{2m}\equiv u_m$ together with
$u_{2m+1}\equiv1$ is a bookkeeping split of the same statement.  All the content is in the integrality of $\mathcal U$.

\section{A Mellin split for the zero-balanced route}\label{sec:mellinsplit}
A parallel manuscript~\cite{Ultraspherical} reduces the physical tangent to the symmetric parameter variation of a zero-balanced Gauss function at the prescribed upper boundary value $2+i0$: with $s=(1-Z)/2$, the tangent is obtained from ${}_2F_1^{\uparrow}(s-\delta/2,s+\delta/2;2s;2)$ by two derivatives in $\delta$ at $\delta=0$.  The following elementary split makes the branch contribution explicit and gives a concrete starting point for a $2$-adic coefficient analysis.  It does not, by itself, prove Conjecture~\ref{conj:W}; the proof of the conjecture is given in~\cite{Ultraspherical}, along a different route (the zero-balanced boundary split, a two-component matching, and $2$-adic dominance).

\begin{proposition}[upper-boundary Mellin split]\label{prop:mellinsplit}
Put $a=s-\delta/2$, $b=s+\delta/2$, and assume first that $0<\operatorname{Re}a<1$ and $\operatorname{Re}b>0$, so that Euler's integral and both limiting integrals below converge.  Then
\begin{align*}
 {}_2F_1^{\uparrow}(a,b;a+b;2)
 =\frac{\Gamma(a+b)}{\Gamma(a)\Gamma(b)}\bigg\{&
 2^{-b}\int_0^1 u^{b-1}(1-u/2)^{a-1}(1-u)^{-a}\,du\\
 &+e^{i\pi a}2^{1-a-b}\int_0^1
 u^{-a}(1-u)^{a-1}(1+u)^{b-1}\,du\bigg\}.
\end{align*}
The identity extends meromorphically in $(a,b)$.
\end{proposition}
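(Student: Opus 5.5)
We prove the identity by starting from Euler's integral at $z$ slightly off the cut and splitting the path at the singular point. With $c=a+b$ and $\operatorname{Re}c>\operatorname{Re}b>0$ we use
\[
{}_2F_1(a,b;c;z)=\frac{\Gamma(c)}{\Gamma(b)\Gamma(c-b)}\int_0^1 u^{b-1}(1-u)^{c-b-1}(1-zu)^{-a}\,du .
\]
Here $c-b=a$, so $\Gamma(c-b)=\Gamma(a)$; this produces the prefactor $\Gamma(a+b)/(\Gamma(a)\Gamma(b))$ in the statement. We then take $z=2+i\varepsilon$ with $\varepsilon\downarrow0$, which is the upper-boundary value ${}_2F_1^{\uparrow}$. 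The integrand factor $(1-zu)^{-a}$ has its branch point at $u=1/z\to\tfrac12$, so we split $\int_0^1=\int_0^{1/2}+\int_{1/2}^1$.

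\textbf{The piece on $[0,\tfrac12]$.} Here $1-2u\ge0$ and the limit is the principal value $(1-2u)^{-a}$. Substituting $u=v/2$ gives
\[
2^{-b}\int_0^1 v^{b-1}(1-v/2)^{a-1}(1-v)^{-a}\,dv ,
\]
which is exactly the first integral. Convergence at $v=1$ needs $\operatorname{Re}a<1$, and at $v=0$ it needs $\operatorname{Re}b>0$; both are among the hypotheses.

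\textbf{The piece on $[\tfrac12,1]$.} Here $1-zu=-(2u-1)-i\varepsilon u$ approaches the negative axis from below. Its argument therefore tends to $-\pi$, so $(1-zu)^{-a}\to e^{i\pi a}(2u-1)^{-a}$. This sign check is the one delicate point, and I would confirm it against the convention fixed by ${}_2F_1^{\uparrow}$: the upper-boundary choice must give $e^{+i\pi a}$ rather than $e^{-i\pi a}$. Next we substitute $2u-1=w$, so $u=(1+w)/2$, $1-u=(1-w)/2$ and $du=dw/2$. The powers of $2$ collect as
\[
2^{-(b-1)}\,2^{-(a-1)}\,2^{-1}=2^{1-a-b},
\]
and the integrand becomes $(1+w)^{b-1}(1-w)^{a-1}w^{-a}$. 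This matches the second integral after renaming $w\mapsto u$. Convergence at $w=0$ needs $\operatorname{Re}a<1$, and at $w=1$ it needs $\operatorname{Re}a>0$, which again are the stated hypotheses.

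\textbf{Justifying the limit.} To pass $\varepsilon\to0$ under the integral we use dominated convergence. Near $u=\tfrac12$ we have $|1-zu|\ge|1-2u|$ (up to a constant) and $\operatorname{Re}a<1$, which gives an integrable bound uniformly in $\varepsilon$. The argument of $1-zu$ stays bounded, so the factor $|e^{-i a\arg}|$ is uniformly bounded as well.

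\textbf{Meromorphic extension.} Both integrals are Beta-type integrals with analytic extra factors. They therefore continue meromorphically in $(a,b)$, for example by Pochhammer contours or by repeated integration by parts at each endpoint. The left side continues analytically in $(a,b)$ away from the poles of $\Gamma(a+b)$. By the identity theorem the equality holds on the whole common domain.

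I expect the only genuine obstacle to be the branch bookkeeping: the sign of the phase $e^{i\pi a}$ and its consistency with the paper's definition of ${}_2F_1^{\uparrow}$.
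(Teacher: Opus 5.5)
Your proposal is correct and follows essentially the same route as the paper: Euler's integral with $c-b=a$, the limit $z=2+i\varepsilon$ justified by dominated convergence, the split at $\tfrac12$ with the substitutions $u=v/2$ and $u=(1+w)/2$, and the phase $e^{+i\pi a}$ from $1-zu$ approaching the negative axis from below. The only minor difference is in the continuation step: the paper gets meromorphy of the two kernels by identifying them as Beta multiples of Gauss functions (Corollary~\ref{cor:kernels}), whereas you invoke the general continuation of Beta-type integrals with analytic extra factors, which is equally valid.
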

\begin{proof}
Start with Euler's integral for ${}_2F_1(a,b;a+b;z)$, analytic in $z$ on the upper half-plane, and let $z\to2+i0$; dominated convergence applies in the stated strip.  Split the integral at $t=1/2$.  In the first half put $t=u/2$; in the second put $t=(1+u)/2$.  On the second half $1-(2+i0)t$ approaches the negative axis from below, hence $(1-(2+i0)t)^{-a}=e^{i\pi a}(2t-1)^{-a}$ on the upper-boundary branch. The displayed formula follows.  By Corollary~\ref{cor:kernels} below both integrals are Beta multiples of Gauss functions, hence meromorphic in $(a,b)$, and so is the left-hand boundary value; the identity therefore persists by analytic continuation wherever both sides are finite.
\end{proof}

\begin{corollary}[closed form of the two kernels]\label{cor:kernels}
In the strip of Proposition~\ref{prop:mellinsplit},
\[
 \int_0^1u^{b-1}(1-u/2)^{a-1}(1-u)^{-a}\,du
 =B(b,1-a)\,{}_2F_1\bigl(1-a,b;\,b+1-a;\,\tfrac12\bigr),
\]
\[
 \int_0^1u^{-a}(1-u)^{a-1}(1+u)^{b-1}\,du
 =B(1-a,a)\,{}_2F_1\bigl(1-b,1-a;\,1;\,-1\bigr).
\]
At the diagonal point $\delta=0$, i.e.\ $a=b=s$, both Gauss functions are summable in closed form: the first by Gauss's second summation theorem (there $c=(\alpha+\beta+1)/2$) and the second by Kummer's theorem (there $c=1+\alpha-\beta$), see~\cite[Sec.~3.1]{AndrewsAskeyRoy}:
\[
 {}_2F_1\bigl(1-s,s;1;\tfrac12\bigr)
 =\frac{\sqrt\pi}{\Gamma\bigl(1-\frac s2\bigr)
                    \Gamma\bigl(\frac{s+1}2\bigr)},
 \qquad
 {}_2F_1\bigl(1-s,1-s;1;-1\bigr)
 =\frac{\Gamma\bigl(\frac{3-s}2\bigr)}
        {\Gamma(2-s)\,\Gamma\bigl(\frac{1+s}2\bigr)} .
\]
\end{corollary}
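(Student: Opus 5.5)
The plan is a direct substitution in the two integrals of Proposition~\ref{prop:mellinsplit}, reducing each to Euler's integral representation
\[
{}_2F_1(a',b';c';z)=\frac{1}{B(b',c'-b')}\int_0^1 u^{b'-1}(1-u)^{c'-b'-1}(1-zu)^{-a'}\,du,
\]
valid for $\operatorname{Re}c'>\operatorname{Re}b'>0$. Once the parameters are matched, the diagonal values at $a=b=s$ come from Gauss's second summation theorem and Kummer's theorem, quoted from~\cite[Sec.~3.1]{AndrewsAskeyRoy}.

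\textbf{First kernel.} I read $u^{b-1}(1-u)^{-a}$ as the Euler weight with $b'=b$ and $c'-b'-1=-a$, so $c'=b+1-a$. The factor $(1-u/2)^{a-1}$ is $(1-zu)^{-a'}$ with $z=\tfrac12$ and $a'=1-a$. The convergence conditions $\operatorname{Re}b>0$ and $\operatorname{Re}(1-a)>0$ are exactly those of the strip. This gives $B(b,1-a)\,{}_2F_1(1-a,b;b+1-a;\tfrac12)$.

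\textbf{Second kernel.} I take $b'=1-a$ and $c'-b'-1=a-1$, so $c'=1$. The factor $(1+u)^{b-1}$ is $(1-(-1)u)^{-(1-b)}$, so $a'=1-b$ and $z=-1$. The conditions $0<\operatorname{Re}a<1$ are again those of the strip. This gives $B(1-a,a)\,{}_2F_1(1-b,1-a;1;-1)$.

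\textbf{Diagonal evaluation.} At $a=b=s$ the first function becomes ${}_2F_1(1-s,s;1;\tfrac12)$. It has the form ${}_2F_1(a',b';\tfrac12(a'+b'+1);\tfrac12)$ because $\tfrac12\bigl((1-s)+s+1\bigr)=1$. Gauss's second theorem gives
\[
\frac{\Gamma(\tfrac12)\Gamma(1)}{\Gamma\bigl(\frac{a'+1}2\bigr)\Gamma\bigl(\frac{b'+1}2\bigr)}
=\frac{\sqrt\pi}{\Gamma\bigl(1-\frac s2\bigr)\Gamma\bigl(\frac{s+1}2\bigr)}.
\]

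The second function becomes ${}_2F_1(1-s,1-s;1;-1)$. It has Kummer's form ${}_2F_1(a',b';1+a'-b';-1)$ with $a'=b'=1-s$, since $1+a'-b'=1$. Kummer's theorem gives
\[
\frac{\Gamma(1+a'-b')\,\Gamma\bigl(1+\frac{a'}2\bigr)}{\Gamma(1+a')\,\Gamma\bigl(1+\frac{a'}2-b'\bigr)}.
\]
Substituting $a'=b'=1-s$ yields $\Gamma\bigl(\frac{3-s}2\bigr)/\bigl(\Gamma(2-s)\,\Gamma\bigl(\frac{1+s}2\bigr)\bigr)$, as claimed.

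\textbf{Main obstacle.} The only delicate points are bookkeeping. The first is assigning which power plays the role of $(1-zu)^{-a'}$, since both kernels have three factors. The second is making sure the roles of $a'$ and $b'$ in Kummer's theorem fit the convention $c'=1+a'-b'$; here this is automatic because $a'=b'$. The parenthetical identifications $c=(\alpha+\beta+1)/2$ and $c=1+\alpha-\beta$ stated in the corollary are not needed to prove the identities, so I would not verify them.
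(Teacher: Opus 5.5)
Your proposal is correct and is essentially the paper's proof. You match the two kernels to Euler's integral with $(\alpha,\beta,\gamma,z)=(1-a,\,b,\,b+1-a,\,\tfrac12)$ and $(1-b,\,1-a,\,1,\,-1)$, then evaluate the diagonal values by Gauss's second theorem (via $\tfrac12\bigl((1-s)+s+1\bigr)=1$) and Kummer's theorem (via $1+a'-b'=1$ with $a'=b'=1-s$); checking these two parameter conditions is itself a verification of the parenthetical identifications you said you would skip.
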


\begin{proof}
Both integrals are Euler representations: $\int_0^1u^{\beta-1}(1-u)^{\gamma-\beta-1}(1-zu)^{-\alpha}du =B(\beta,\gamma-\beta)\,{}_2F_1(\alpha,\beta;\gamma;z)$ with $(\alpha,\beta,\gamma,z)=(1-a,\,b,\,b+1-a,\,\tfrac12)$ for the first and $(1-b,\,1-a,\,1,\,-1)$ for the second.  At $a=b=s$ the first Gauss function has parameters $(1-s,s;1)$ with $1=\bigl((1-s)+s+1\bigr)/2$, so Gauss's second theorem applies and gives $\Gamma(\tfrac12)\Gamma(1)/\bigl(\Gamma(1-\tfrac s2) \Gamma(\tfrac{s+1}2)\bigr)$; the second has parameters $(1-s,1-s;1)$ with $1=1+\alpha-\beta$, so Kummer's theorem applies and gives $\Gamma(1)\Gamma(\tfrac{3-s}2)/\bigl(\Gamma(2-s)\Gamma(\tfrac{1+s}2)\bigr)$.
\end{proof}

\begin{remark}[what this attack achieves and what remains]
Proposition~\ref{prop:mellinsplit} separates the boundary value into a branch-free part and one explicit phase $e^{i\pi a}$, and Corollary~\ref{cor:kernels} shows that at $\delta=0$ both parts collapse to Gamma quotients: the zero-balanced boundary value is an explicit perturbation of two classically summable points.  The transverse tangent (two $\delta$-derivatives at $\delta=0$) therefore involves only first and second parameter derivatives of Gauss functions at a
Gauss-second point and at a Kummer point, objects with classical digamma-series expansions, together with the derivative of the explicit phase.  The unresolved step is arithmetic: to turn the resulting large-$Z$ coefficient extraction into an integral lattice whose reduction modulo $2$ proves $((2m)!)^2w_m\equiv1\pmod2$.  The split also confirms that changing $2+i0$ to $2-i0$ conjugates the response and cannot be ignored.  Analytic background on zero-balanced functions is available in~\cite{AndersonEtAl,SimicVuorinen}; the specialization itself is the one classified by Vid\=unas~\cite{VidunasAppell}, and its derivation on the physical curve is given in~\cite{Ultraspherical}.
\end{remark}

\section{The code}\label{sec:code}

Everything above that is marked as computed is reproduced by the following programmes, in exact arithmetic.  We give the three that carry the weight; the full package contains the remainder, including the endpoint check of Proposition~\ref{prop:appell} for $r\le20$ and the independent Stirling--Newton phase engine of Remark~\ref{rem:identification}.

\subsection{The vanishing of \texorpdfstring{$\sum k!/(2k+1)!!$}{the factorial series}}

The following code verifies Theorem~\ref{thm:vanish} and the contiguity \eqref{eq:contig} that proves it.

{\small
\begin{verbatim}
from fractions import Fraction as F

def v2(x):
    n, d = x.numerator, x.denominator; e = 0
    while n % 2 == 0: n //= 2; e += 1
    while d % 2 == 0: d //= 2; e -= 1
    return e

def c(k):                       # c_k = k!/(2k+1)!! = 2^k/((2k+1) C(2k,k))
    num, den = F(1), F(1)
    for j in range(1, k+1): num *= j; den *= (2*j+1)
    return num/den

K = 800
part = F(0); vals = []
for k in range(K):
    part += c(k)
    if k+1 in (10, 20, 50, 100, 200, 400, 800):
        vals.append((k+1, v2(part)))
print("nu_2 of the partial sums:", vals)   # -> unbounded, so the sum is 0

# the contiguity (2k+1) c_k = k c_{k-1}, giving (i+1) S_{i+1} = i S_{i-1}
assert all((2*k+1)*c(k) == k*c(k-1) for k in range(1, 200))
\end{verbatim}
}

\subsection{The tangent coefficients}

The following code is the engine behind the verification range.  Only the $A$-linear part at the Legendre point is needed, so one may set $\ma=-\frac14+a$ with $a^2=0$ and $\mb=-\frac14$; the monomial key drops from $(c,d,i,j)$ to $(c,d,e)$ with $e\in\{0,1\}$.  Since $U_n,V_n$ lie in $\mathbb Z[\ma,\mb][x,y]$, after the substitution all denominators are powers of $4$ of exponent at most $n$, so storing $U_n$ as an integer polynomial at the implicit scale $4^{-n}$ makes every step exact integer arithmetic, with a single factor $4$ per step.

{\small
\begin{verbatim}
from fractions import Fraction as F

def padd(p, q):
    r = dict(p)
    for k, v in q.items():
        s = r.get(k, 0) + v
        if s: r[k] = s
        elif k in r: del r[k]
    return r

def pmul(p, q):                                  # truncated at a^2 = 0
    r = {}
    for (c1, d1, e1), v1 in p.items():
        for (c2, d2, e2), v2 in q.items():
            e = e1 + e2
            if e > 1: continue
            k = (c1+c2, d1+d2, e)
            s = r.get(k, 0) + v1*v2
            if s: r[k] = s
            elif k in r: del r[k]
    return r

thx = lambda p: {k: v*k[0] for k, v in p.items() if k[0]}
thy = lambda p: {k: v*k[1] for k, v in p.items() if k[1]}
mulx = lambda p: {(c+1, d, e): v for (c, d, e), v in p.items()}
muly = lambda p: {(c, d+1, e): v for (c, d, e), v in p.items()}
pscal = lambda p, z: {k: v*z for k, v in p.items()} if z else {}

def legendre_horn(nmax):
    """U_n, V_n as integer polynomials at the implicit scale 4^{-n}."""
    U = {0: {}, 1: {(1, 0, 0): -1, (1, 0, 1): 4}}   # x*mu_a = x(-1/4 + a)
    V = {0: {}, 1: {(0, 1, 0): -1}}                 # y*mu_b = -y/4
    for n in range(1, nmax):
        Tn = padd(U[n], V[n])
        cu, cv = thx(Tn), thy(Tn)
        for a in range(1, n):
            b = n - a
            Tb = padd(U[b], V[b])
            cu = padd(cu, pmul(U[a], Tb))
            cv = padd(cv, pmul(V[a], Tb))
        su = padd(thx(U[n]), U[n]); sv = padd(thy(V[n]), V[n])
        for a in range(1, n):
            b = n - a
            su = padd(su, pmul(U[a], U[b]))
            sv = padd(sv, pmul(V[a], V[b]))
        U[n+1] = padd(pscal(cu, 4), pscal(mulx(su), -4))
        V[n+1] = padd(pscal(cv, 4), pscal(muly(sv), -4))
    return U, V

def gpow(c, d):                 # 2^{c+d} x_0^c y_0^d as an integer pair
    re, im = 1, 0
    for _ in range(c): re, im = re-im, re+im
    for _ in range(d): re, im = re+im, im-re
    return re, im

def tangent(nmax):
    U, V = legendre_horn(nmax); g = {}
    for n in range(1, nmax+1):
        T = padd(U[n], V[n]); re = im = F(0)
        for (c, d, e), val in T.items():
            if e != 1: continue
            R, I = gpow(c, d)
            w = F(val, 4**n * 2**(c+d) * (c+d))
            re += w*R; im += w*I
        g[n] = (re, im)
    return g
\end{verbatim}
}

\subsection{The normal form and the law}

The following code builds $W$ from the tangent and checks Conjecture~\ref{conj:W} together with the transfer of Theorem~\ref{thm:transferintro}.

{\small
\begin{verbatim}
from math import factorial

def s2(n): return bin(n).count('1')

def normal_form(g, N):
    """w_m from  W(t) = sinh(2t)/t^2 * Im Phi(t)."""
    M = N + 2
    Im = [F(0)]*M
    for n in range(1, N+1): Im[n] = g[n][1]/F(factorial(n-1))
    sh = [F(0)]*M
    for k in range(1, M, 2): sh[k] = F(2**k, factorial(k))
    P = [F(0)]*M
    for i in range(M):
        if not Im[i]: continue
        for j in range(M-i):
            if sh[j]: P[i+j] += Im[i]*sh[j]
    return [P[2*m+2] for m in range((N-1)//2)]       # w_m
\end{verbatim}
}

{\small
\begin{verbatim}
g = tangent(95)
w = normal_form(g, 95)
law = all(v2(w[m]) == -2*(2*m - s2(m)) for m in range(len(w)))
print("nu_2(w_m) = -2 nu_2((2m)!) for m <=", len(w)-1, ":", law)

# and the equivalent physical form nu_2(l_r) = -E_r
E = lambda r: 3*r - 1 + (r - 1 - s2(r-1))
sharp = all(v2(g[2*r-1][1]/F(2**(2*r-1))) == -E(r)
            for r in range(1, (max(g)+1)//2 + 1))
print("nu_2(l_r) = -E_r :", sharp)
\end{verbatim}
}

\section{Status}

\begin{center}
\small
\begin{tabular}{@{}ll@{}}
\toprule
Statement & Status\\
\midrule
Integral Horn recurrence and support (Thm.~\ref{thm:hornrec})
  & proved\\
Identification of \eqref{eq:kappafromL} with the phase constants
  & verified $r\le8$\\
Odd law $o_r\mid\operatorname{lcm}(1,3,\dots,2r-1)$ (Thm.~\ref{thm:odd})
  & proved\\
Exact midpoint cost (Lem.~\ref{lem:cost})
  & proved\\
$2^{E_r}\kappa_r\in\Zt[\ma,\mb]$ (Thm.~\ref{thm:coarseintro})
  & proved\\
$\sum_{k\ge0}k!/(2k+1)!!=0$ in $\Qt$ (Thm.~\ref{thm:vanish})
  & proved\\
Valuation law $\nu_2(J_m)=m+\nu_2(m+1)$ (Thm.~\ref{thm:extremalintro})
  & proved\\
Top-layer formula $[\ma^r]\kappa_r=\gamma_rI_r$ (Prop.~\ref{prop:appell})
  & verified $r\le20$\\
Extremal unit given the top layer (Cor.~\ref{cor:extremal})
  & proved\\
Bessel-basis denominator gap (Rem.~\ref{rem:besselgap})
  & open, gap $r-1$\\
Symmetric half: $\operatorname{Re}\Phi$ closed, $\operatorname{Im}g_{2k}=0$
  (Rem.~\ref{rem:identityR}; not used)
  & verified $n\le95$, proved~\cite{Ultraspherical}\\
Weights $\nu_2(d_k)=\nu_2(e_k)=s_2(k)$ (Lem.~\ref{lem:weights})
  & proved\\
Transfer $\ell\leftrightarrow W$ (Thm.~\ref{thm:transferintro})
  & proved\\
Conjecture~W
  & proved~\cite{Ultraspherical}, verified $m\le46$\\
$\nu_2(\ell_r)=-E_r$
  & proved~\cite{Ultraspherical}, verified $r\le48$\\
No small annihilating operator (Cert.~\ref{cert:noeq})
  & certified\\
Odd-prime transport through $D$, $S$
  & not proved\\
\bottomrule
\end{tabular}
\end{center}

\section*{Final comments}

Every statement labelled \emph{proved} above is proved in this paper, without computational input.  Every statement labelled \emph{verified} or \emph{certified} is an exact rational or Gaussian-rational computation whose code is included; none involves floating-point arithmetic.

\section*{Declarations}

\noindent\textbf{Competing interests.}  The author declares no competing
interests.

\smallskip
\noindent\textbf{Data and code availability.}  This paper reports no experimental data.  Every statement labelled \emph{verified} or \emph{certified} is an exact computation in rational or Gaussian-rational arithmetic; no floating-point arithmetic is used anywhere.  The three programmes that carry the weight of Sections~\ref{sec:extremal}--\ref{sec:evidence} are reproduced in full in Section~\ref{sec:code}.

\end{document}